\DeclareMathOperator{\pd}{pd}
\DeclareMathOperator{\PD}{PD}
\newtheorem{theorem}{Theorem}[section]
\newtheorem{lemma}[theorem]{Lemma}
\newtheorem{corollary}[theorem]{Corollary}
\newtheorem{remark}[theorem]{Remark}
\newtheorem{conjecture}[theorem]{Conjecture}
\theoremstyle{definition}
\newtheorem{definition}[theorem]{Definition}
\theoremstyle{remark}
\numberwithin{equation}{section}
\title[Partitions with Designated Summands]{Partitions With Designated Summands Not Divisible by $2^l$, $2$, and $3^l$ Modulo $2$, $4$, and $3$}
\author[Herden, Sepanski, Stanfill, $\ldots$]{Daniel Herden, Mark R. Sepanski, Jonathan Stanfill, Cordell Hammon, Joel Henningsen, Henry Ickes, Indalecio Ruiz}
\address{
Department of Mathematics,
Baylor University,
Sid Richardson Building,
1410 S.4th Street,
Waco, TX 76706, USA}
\email{%daniel\_herden@baylor.edu,
mark\_sepanski@baylor.edu}
\begin{document}
%%%%%% Header Creation

%\thanks{
%This work is a collaborative effort of the Baylor graduate course \emph{Research in Combinatorics}.}

%\date{\today}
%\@namedef{subjclassname@2020}{\textup{2020} Mathematics Subject Classification} %This allows you to input 2020 in the MSC below
\subjclass[2010]{Primary: 11P83; Secondary: 05A17.}
\keywords{Partitions, designated summands, generating functions}

\begin{abstract}
Numerous congruences for partitions with designated summands have been proven since first being introduced and studied by Andrews, Lewis, and Lovejoy. This paper explicitly characterizes the number of partitions with designated summands whose parts are not divisible by $2^\ell$, $2$, and $3^\ell$ working modulo $2,\ 4,$ and $3$, respectively, greatly extending previous results on the subject. We provide a few applications of our characterizations throughout in the form of congruences and a computationally fast recurrence. Moreover, we illustrate a previously undocumented connection between the number of partitions with designated summands and the number of partitions with odd multiplicities.
\end{abstract}

\maketitle
\tableofcontents

%%%%%% Actual Paper

%%%%%%%%%%%%%%%%%%%%%%%%%%%%%%%%%%%%%%
\section{Introduction}\label{intro}
%%%%%%%%%%%%%%%%%%%%%%%%%%%%%%%%%%%%%%

Partitions with designated summands have been studied by many authors since first being introduced by Andrews, Lewis, and Lovejoy in \cite{ALL02}. These partitions are constructed by taking the ordinary partitions and marking exactly one part of each part size, often denoted by affixing a prime to the tagged part. For instance, there are 15 partitions of 5 with designated summands:
\begin{gather*}
5',\ 4'+1',\ 3'+2',\ 3'+1'+1,\ 3'+1+1',\ 2'+2+1',\\
2+2'+1',\ 2'+1'+1+1,\ 2'+1+1'+1,\ 2'+1+1+1', 1'+1+1+1+1,\\
1+1'+1+1+1,\ 1+1+1'+1+1,\ 1+1+1+1'+1,\ 1+1+1+1+1'.
\end{gather*}
Hence $\PD(5)=15$, where $\PD(n)$ denotes the total number of partitions of $n$ with designated summands.

By \cite[Theorem 1]{ALL02}, the generating function for the number of partitions with designated summands whose parts belong to a set of positive integers, $S$, is given by
\begin{align*}
\sum_{n \geq0} \PD_S(n)q^n=\prod_{n\in S}\dfrac{1-q^{6n}}{(1-q^n)(1-q^{2n})(1-q^{3n})}.
\end{align*}
Furthermore, by taking $S_k$ to be the set of positive integers not divisible by $k$, one obtains the generating function for the number of partitions with designated summands whose parts are not divisible by $k$, denoted by $\PD_k(n)$. It is given in \cite[Corollary 3]{ALL02} by
\begin{align} \label{genfunct}
\sum_{n \geq0} \PD_k(n)q^n=\dfrac{(q^6;q^6)_\infty(q^k;q^k)_\infty(q^{2k};q^{2k})_\infty(q^{3k};q^{3k})_\infty}{(q;q)_\infty(q^2;q^2)_\infty(q^3;q^3)_\infty(q^{6k};q^{6k})_\infty},
\end{align}
where $(a;q)_\infty$ represents the \emph{$q$-series} or \emph{$q$-Pochhammer symbol}
\begin{align*}
(a;q)_\infty=\prod_{n\geq0}(1-aq^n).
\end{align*}

The generating function for $\PD_k(n)$, which will be the central object of study for this paper, has been used to prove numerous congruences for partitions with designated summands in \cite{ALL02,CJJS13,dS20b,dS20a,H17,NG16}. In some cases, the generating function for $\PD_k(n)$ is more explicitly known. For instance, one has the following complete characterization of $\PD_2(n)$ modulo $2$.
\begin{theorem}[{\cite[Corollary 10]{ALL02}}] \label{pd2}
For all $n\geq0$,
\begin{align*}
\PD_2(n)\equiv\begin{cases} 1 \, \operatorname{mod} 2, & \text{if } n=0 \text{ or } n=k^2 \text{ for } k\geq 1, \, 3\nmid k\\
0 \, \operatorname{mod} 2, & \text{otherwise}.
\end{cases}
\end{align*}
\end{theorem}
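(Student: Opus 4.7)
The plan is to simplify the generating function (\ref{genfunct}) modulo 2, then use a 3-dissection of Ramanujan's $\psi$-function together with the Jacobi triple product to isolate the contributions from squares coprime to 3. The first step is mechanical: specializing (\ref{genfunct}) at $k=2$ and repeatedly applying the elementary congruence $(q^a;q^a)_\infty^2\equiv(q^{2a};q^{2a})_\infty\pmod 2$ causes $(q^6;q^6)_\infty^2$ to cancel the $(q^{12};q^{12})_\infty$ in the denominator and rewrites $(q^4;q^4)_\infty$ as $(q;q)_\infty^4$, collapsing the whole expression to
\[
\sum_{n\ge 0}\PD_2(n)\,q^n\equiv\frac{(q;q)_\infty^3}{(q^3;q^3)_\infty}\pmod 2.
\]

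The heart of the argument is the identification of this eta quotient modulo 2. I would apply Jacobi's identity $(q;q)_\infty^3\equiv\psi(q)\pmod 2$, where $\psi(q)=\sum_{n\ge 0}q^{n(n+1)/2}$, together with the 3-dissection $\psi(q)=f(q^3,q^6)+q\,\psi(q^9)$, where $f(a,b)=\sum_{n\in\mathbb{Z}}a^{n(n+1)/2}b^{n(n-1)/2}$ is Ramanujan's general theta function. Using the product representation $f(q^3,q^6)=(-q^3;q^9)_\infty(-q^6;q^9)_\infty(q^9;q^9)_\infty$, the three factors collapse modulo 2 to $(q^3;q^9)_\infty(q^6;q^9)_\infty(q^9;q^9)_\infty=(q^3;q^3)_\infty$, since they exactly partition the positive multiples of 3 by residue modulo 9. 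Dividing through therefore yields
\[
\frac{(q;q)_\infty^3}{(q^3;q^3)_\infty}\equiv 1+\frac{q\,\psi(q^9)}{(q^3;q^3)_\infty}\pmod 2,
\]
which cleanly separates the $n=0$ case of the theorem.

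For the remaining quotient I would use $\psi(q^9)=(q^{18};q^{18})_\infty/(q^9;q^{18})_\infty$ together with $1/(q^3;q^3)_\infty\equiv(q^3;q^6)_\infty\pmod 2$ (itself a consequence of $(q^3;q^3)_\infty^2\equiv(q^6;q^6)_\infty\pmod 2$), and then split $(q^3;q^6)_\infty=(q^3;q^{18})_\infty(q^9;q^{18})_\infty(q^{15};q^{18})_\infty$ to telescope the quotient into $(q^{18};q^{18})_\infty(q^3;q^{18})_\infty(q^{15};q^{18})_\infty$. Applying the Jacobi triple product with $q\mapsto q^9$ and $z\mapsto q^6$ then identifies this product modulo 2 with $\sum_{n\in\mathbb{Z}}q^{9n^2+6n}$. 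Multiplying through by $q$ gives $\sum_{n\in\mathbb{Z}}q^{(3n+1)^2}$, and since $n\mapsto|3n+1|$ bijects $\mathbb{Z}$ onto $\{m\ge 1:3\nmid m\}$, this equals $\sum_{m\ge 1,\,3\nmid m}q^{m^2}$, matching the theorem.

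The main obstacle is the middle step, namely the collapse $f(q^3,q^6)\equiv(q^3;q^3)_\infty\pmod 2$ that produces the precise constant term $1$ needed to encode the isolated $n=0$ case. Without this clean decoupling, one would face a single theta-like series whose coefficients modulo 2 are not transparently aligned with the set of squares coprime to 3, and the final Jacobi triple product identification would be substantially more delicate.
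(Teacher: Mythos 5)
Your proof is correct, and every step checks out: the reduction of \eqref{genfunct} at $k=2$ to $(q;q)_\infty^3/(q^3;q^3)_\infty$ modulo $2$ is right (it agrees with the paper's own observation that $\pd_2(q)\equiv g(q)=f_1f_2f_3/f_6\equiv f_1^3/f_3 \bmod 2$), the $3$-dissection $\psi(q)=f(q^3,q^6)+q\psi(q^9)$ is the standard one, the collapse $f(q^3,q^6)\equiv(q^3;q^3)_\infty \bmod 2$ via the triple-product factors partitioning the multiples of $3$ by residue mod $9$ is valid, and the final telescoping plus Jacobi triple product correctly produces $\sum_{n\in\mathbb{Z}}q^{(3n+1)^2}=\sum_{m\ge 1,\,3\nmid m}q^{m^2}$. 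The comparison with the paper is somewhat moot, however: the paper does not prove this statement at all, but cites it as \cite[Corollary 10]{ALL02} and later merely records it as the identity $\pd_2(q)\equiv 1+\sum_{k\ge 1,\,3\nmid k}q^{k^2} \bmod 2$ in Lemma \ref{pd2 mod 2}. The route implicit in the sources the paper leans on is shorter but less self-contained: \cite[Equation (3.13)]{ALL02} (used in the proof of Theorem \ref{pd2(n) mod4}) writes $\pd_2(q)$ as $\big(\sum_j q^{(3j)^2}-\sum_j q^{(3j+1)^2}\big)/\varphi(-q)$, and reducing numerator and denominator modulo $2$ gives the claim in one line; similarly the proof of Theorem \ref{pd2(2n) mod4} gets $f_1^3/f_3\equiv 1+\sum_{3\nmid k}q^{k^2}$ by citing \cite[Equations (2) and (4)]{SS20}. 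Your argument buys a from-scratch derivation requiring only Jacobi's cube identity, the $3$-dissection of $\psi$, and the triple product, at the cost of more bookkeeping; it would serve as a legitimate standalone proof of the cited result.
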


Similarly, a complete characterization of $\PD_3(n)$ modulo $3$ exists.
\begin{theorem}[{\cite[Theorem 2]{dS20b}}] \label{pd3}
For all $n\geq0$,
\begin{align*}
\PD_3(n)\equiv \begin{cases} 1\quad\ \, \operatorname{mod} 3, & \text{if } n=0,\\
r(n) \, \operatorname{mod} 3, & \text{otherwise},
\end{cases}
\end{align*}
where $r(n)$ is the number of solutions to $n = k(k + 1) + 3m(m+ 1) + 1$ in nonnegative integers $k,m$.
\end{theorem}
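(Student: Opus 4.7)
The plan is to reduce the generating function for $\PD_3(n)$ modulo $3$ and identify the result with the Ramanujan theta product $1+q\psi(q^2)\psi(q^6)$, where $\psi(q)=\sum_{k\geq 0}q^{k(k+1)/2}=(q^2;q^2)_\infty^2/(q;q)_\infty$. Setting $k=3$ in (\ref{genfunct}) and cancelling the common $(q^3;q^3)_\infty$ factor yields
$$\sum_{n\geq 0}\PD_3(n)q^n=\frac{(q^6;q^6)_\infty^2(q^9;q^9)_\infty}{(q;q)_\infty(q^2;q^2)_\infty(q^{18};q^{18})_\infty}.$$
Applying $(q^n;q^n)_\infty^3\equiv(q^{3n};q^{3n})_\infty\pmod{3}$ (a consequence of $(1-x)^3\equiv 1-x^3\pmod{3}$) to replace $(q^9;q^9)_\infty$ by $(q^3;q^3)_\infty^3$ and $(q^{18};q^{18})_\infty$ by $(q^6;q^6)_\infty^3$ collapses the generating function to
$$\sum_{n\geq 0}\PD_3(n)q^n\equiv\frac{(q^3;q^3)_\infty^3}{(q;q)_\infty(q^2;q^2)_\infty(q^6;q^6)_\infty}\pmod{3}.$$

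Observing that $q\psi(q^2)\psi(q^6)=\sum_{k,m\geq 0}q^{1+k(k+1)+3m(m+1)}=\sum_{n\geq 1}r(n)q^n$, the theorem becomes equivalent to the $q$-series congruence
$$\frac{(q^3;q^3)_\infty^3}{(q;q)_\infty(q^2;q^2)_\infty(q^6;q^6)_\infty}\equiv 1+q\psi(q^2)\psi(q^6)\pmod{3}.$$
Clearing denominators and rewriting $\psi(q^2)\psi(q^6)$ in terms of $q$-Pochhammer products, this amounts to the $2$-dissection-style identity
$$\frac{(q^3;q^3)_\infty^3}{(q;q)_\infty}\equiv (q^2;q^2)_\infty(q^6;q^6)_\infty+q(q^4;q^4)_\infty^2(q^{12};q^{12})_\infty^2\pmod{3}.$$

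The main obstacle will be establishing this last identity. My preferred route is via Borwein's cubic theta function $a(q)=\sum_{m,n\in\mathbb{Z}}q^{m^2+mn+n^2}$ together with its classical $2$-dissection $a(q)=\phi(q)\phi(q^3)+4q\psi(q^2)\psi(q^6)$, where $\phi(q)=\sum_{n\in\mathbb{Z}}q^{n^2}$. Since every nonconstant coefficient of $a(q)$ is a multiple of $6$ (via the standard formula counting divisors of $n$ congruent to $1$ versus $2$ modulo $3$), we have $a(q)\equiv 1\pmod{3}$, turning the target identity into the cleaner claim $\phi(q)\phi(q^3)+(q^3;q^3)_\infty^3/[(q;q)_\infty(q^2;q^2)_\infty(q^6;q^6)_\infty]\equiv 2\pmod{3}$; this I would verify by expanding $\phi(q)=(q^2;q^2)_\infty^5/[(q;q)_\infty^2(q^4;q^4)_\infty^2]$ and simplifying with repeated applications of the cube congruence. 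As a fallback, a direct even/odd dissection of $(q^3;q^3)_\infty^3/(q;q)_\infty$ via Jacobi's triple product should match the two summands on the right modulo $3$ term by term. Once this identity is in hand, reading off the coefficient of $q^n$ yields the constant $1$ at $n=0$ and the count $r(n)$ for $n\geq 1$, exactly matching the theorem.
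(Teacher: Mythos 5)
Be aware that the paper does not actually prove Theorem \ref{pd3}; it is quoted from \cite[Theorem 2]{dS20b}. The closest the paper comes is its own alternate characterization in Theorem \ref{pd 3 mod 3} and Remark \ref{remark pd3}, which reduce $\pd_3(q)\equiv g(q)^2\equiv h(q)h(q^3) \, \operatorname{mod} 3$ with $h(q)=f_1^2/f_2=\varphi(-q)$ and read the coefficients off as representation counts $n=k_0^2+3k_1^2$. Your opening reductions are correct and land on the same series: $f_3^3/(f_1f_2f_6)\equiv f_1^8/f_2^4=h(q)^4 \, \operatorname{mod} 3$, and your identification $\sum_{n\ge1}r(n)q^n=q\psi(q^2)\psi(q^6)$ is also right, so the theorem is indeed equivalent to the congruence
\begin{align*}
\frac{f_3^3}{f_1}\equiv f_2f_6+qf_4^2f_{12}^2 \, \operatorname{mod} 3.
\end{align*}
The one genuine gap is that you leave this key identity as a plan rather than a proof. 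It is true, and your Borwein route does close: $a(q)\equiv1 \, \operatorname{mod} 3$ together with $a(q)=\varphi(q)\varphi(q^3)+4q\psi(q^2)\psi(q^6)$ reduces the claim to $\varphi(q)^4+\varphi(-q)^4\equiv2 \, \operatorname{mod} 3$ (using $\varphi(q^3)\equiv\varphi(q)^3$ and $f_3^3/(f_1f_2f_6)\equiv\varphi(-q)^4$), which follows from Jacobi's four-square theorem since $r_4(2m)=24\sigma(\text{odd part of }2m)\equiv 0 \, \operatorname{mod} 3$ --- but none of this is carried out in your write-up. A much shorter close is available: the exact $2$-dissection $f_3^3/f_1=f_4^3f_6^2/(f_2^2f_{12})+qf_{12}^3/f_4$ from \cite[Equation (3.75)]{XY13}, which this paper already invokes in Lemma \ref{lemma-odd/even}, gives your congruence in one line upon applying $f_k^3\equiv f_{3k} \, \operatorname{mod} 3$ to each term. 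With that step supplied, your argument is a valid and genuinely different derivation from the paper's $h(q)h(q^3)$ characterization: yours produces the pentagonal-type count $r(n)$ of \cite{dS20b} directly, while the paper's produces the quadratic-form count $e_{n1}$ and only relates the two in Remark \ref{remark pd3}.
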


In this paper, we study further characterizations of $\PD_k(n)$ by means of its generating function. Our main results include characterizing $\PD_{2^\ell}(n)$ mod $2$ in Section~\ref{sec: mod 2}, $\PD_{2}(n)$ mod $4$ in Section \ref{sec: mod 4}, and $\PD_{3^\ell}(n)$ mod $3$ in Section \ref{sec: mod 3}.

In Section~\ref{sec: mod 2}, Theorem \ref{thm: pd 2 to the ell}, we show that
$$ \PD_{2^\ell}(n) \equiv a_{n\ell} \, \operatorname{mod} 2$$ where
$$a_{n \ell}  = \Big|\{\text{solutions to } n = \sum_{m = 0}^{\ell - 1} 2^m k_m^2 \, | \, k_m \geq 0, \, 3 \nmid k_m \text{ or } k_m=0\}\Big|.$$
For example, along with a few auxiliary results, this allows us to validate numerous congruences in Theorem \ref{thm: cong 1A} and Corollary \ref{cor: cong 1D 3.6}.
For $n\geq0$ and \\$r \in\{5,7,10,13,14,15,20,21,23,26,28,29,30,31\}$,
\begin{align*}
	\PD_4(32n+r)\equiv 0 \, \operatorname{mod} 2,
\end{align*}
and
\begin{align*}
	\PD_{8}(32n+24)\equiv 0 \, \operatorname{mod} 2.
\end{align*}
Additionally,
for $\ell \geq 3$ and $r\in\{4, 6\}$,
$$ \PD_{2^\ell}(8n+r) \equiv 0 \, \operatorname{mod}  2,$$
for $\ell \geq 4$ and  $r\in\{4,6,10,12,14\}$
$$ \PD_{2^\ell}(16n+r) \equiv 0 \, \operatorname{mod}  2,$$
and for $\ell \geq 5$ and
$r\in\{4,6,10,12,14,16,20,22,24,26,28,30\}$
$$ \PD_{2^\ell}(32n+r) \equiv 0 \, \operatorname{mod}  2.$$
In general for $\ell \geq j\geq 3$ and $0\leq s < 2^{j-1}$ with $s$ not of the form $0$ or $4^a (8b +1)$ for some $a,b\geq 0$,
$$ \PD_{2^\ell}(2^j n+2s) \equiv 0 \, \operatorname{mod}  2.$$

Finally, in Theorem \ref{thm: odd mult} we provide the very interesting congruence
$$ \PD (n) \equiv b_n \, \operatorname{mod} 2,$$
which links the number $\PD (n)$ of partitions of $n$ with designated summands \cite[A077285]{OEIS}
to the number $b_n$ of partitions of $n$ with odd multiplicities \cite[A055922]{OEIS}, which may be of use in the study of the parity of partitions with odd multiplicities as in \cite{HS18}, \cite{KZ20}, and \cite{SS20}.

In Sections \ref{sec: pd4} and \ref{sec: dissec}, we prove the following remarkably explicit alternate characterization of $\PD_4(n)$ mod $2$ in Theorem \ref{thm: pd4}:
For all $n\geq 0$,
\begin{align*}
	\PD_4(n)\equiv\begin{cases} 1 \, \operatorname{mod} 2, & \text{if } n=m k^2 \text{ for } m,k\geq 0, \, m \mid 6,\\
		0 \, \operatorname{mod} 2, & \text{otherwise}.
	\end{cases}
\end{align*}

In Section \ref{fast recurrence}, motivated by \cite[Corollaries 6 and 9]{ALL02}, \cite{HSJ1etc20}, and Section \ref{sec: mod 2}, for general $n\geq0$ and $k\geq2$, we also provide a computationally fast recurrence for $\PD_k(n)$ mod $2$ in Theorem~\ref{thm: fast rec.} of the form
\begin{align*}
	\PD_k(n) + \sum_{\ell\ge 1,\, 3\nmid \ell} \PD_k(n-\ell^2) \equiv
	\begin{cases} 1 \, \operatorname{mod} 2, & \text{if } n=0 \text{ or } n =km^2 \text{ for some } 3\nmid m,\\
		0 \, \operatorname{mod} 2, & \text{otherwise}.
	\end{cases}
\end{align*}

In Section \ref{sec: mod 4}, we consider $\PD_2(n) \operatorname{mod} 4$. Theorem \ref{Theorem odd}
gives the following explicit characterization of $\PD_2(2n+1)$ mod $4$,
$$ \PD_2(2n+1)\equiv d_n \, \operatorname{mod} 4 $$
where
\begin{align*}
	d_n = &\bigg|\{ \text{solutions to } n = 3j(3j- 1) + 3k(3k- 1) = 2\big[\binom{3j}2 +\binom{3k}2\big]  \, \Big| \, \  k,j\in\mathbb{Z}   \}\bigg|.
\end{align*}
Theorem \ref{pd2(2n) mod4} provides an explicit characterization of $\PD_2(2n)$ mod $4$ as
	\begin{align*}
		\sum_{n\geq0}\PD_2(2n)q^{n} &  \equiv 1+2\sum_{k \geq 1, \, 3 \nmid k}q^{k^2}+\sum_{k,\ell \geq 1, \, 3 \nmid k,\ell}q^{k^2+\ell^2} \, \operatorname{mod} 4.
	\end{align*}
Thoerem \ref{pd2(n) mod4} gives the following combined expression, 
	\begin{align*}
		\pd_2(q) 
		\equiv  1+\Big( \sum_{k \geq 1, \, 3 \nmid k}q^{k^2}\Big) \Big(\sum_{k\in \mathbb{Z}}q^{k^2}\Big) \, \operatorname{mod} 4.
	\end{align*}
Finally, in Theorem \ref{thm: cong 2}, we show
that for $n\geq1,$
\begin{align*}
	\PD_{2}(3n)\equiv0  \, \operatorname{mod}  4
\end{align*}
and for all $n\geq1$ with $6\nmid n$,
\begin{align*}
	\PD_{2}(2n+1)&\equiv0  \, \operatorname{mod}  4.
\end{align*}

In Section \ref{sec: mod 3}, Theorem \ref{our pd3}, we show that
%for all $n\geq0$, 	$$ \PD_{3^\ell}(n) \equiv e_{n \ell} \, \operatorname{mod} 3 $$ 	where 	\begin{align*} 		e_{n \ell} & =\Big|\{\text{solutions to } n = k_0^2 + \sum_{m=1}^{\ell -1} 3^m(k_m^2 + k_m'^2) + 3^\ell k_\ell^2\, | \, \\ 		&\quad\qquad\; k_m, k_m' \in \mathbb{Z} \text{ or } \mathbb{N} \text{ when $k_m, k_m'$ is even or odd, respectively}  \}\Big|. 	\end{align*}
$$ \PD_{3}(n) \equiv e_{1 \ell} \, \operatorname{mod} 3$$
where
\begin{align*}
	e_{n1} &= \big|\{\text{solutions to } n = k_0^2  + 3 k_1^2\, | \, \\
	&\quad\qquad\; k_m \in \mathbb{Z} \text{ or } \mathbb{N} \text{ when $k_m$ is even or odd, respectively}  \}\big|.
\end{align*} This gives an alternate characterization of Theorem \ref{pd3}. See Remark \ref{remark pd3} for alternate forms of $e_{nl}$.

In general, Theorem \ref{thm: cong 4B}, for all $n\geq0$ and $\ell \ge 2$, we show that
\begin{align*}
	\PD_{3^\ell}(3n)\equiv e^*_{n \ell}  \, \operatorname{mod}  3
\end{align*}
where
\begin{align*}
	e^*_{n \ell} & = \big|\{\text{solutions to } n = k_0^2+k_0'^2+ 3^{\ell-1}(k_1^2 +k_1'^2) \, | \, \\
	&\quad\qquad\; k_m,k_m' \in \mathbb{Z} \text{ or } \mathbb{N} \text{ when $k_m,k_m'$ is even or odd, respectively}\}\big|.
\end{align*}

As an easy application, Theorem \ref{thm: cong 3}, we provide very short proofs of the following congruences for which proofs using dissections can be found in \cite[Theorem 3]{dS20b} and \cite[Theorem 3]{dS20a} for $\PD_3(9n+6)$ and $\PD_{3k}(3n+2)$, respectively,
	\begin{align*}
		\PD_{3}(9n+6)\equiv 0 \, \operatorname{mod} 3
	\end{align*}
	and
	\begin{align*}
		\PD_{3^\ell}(3n+2)\equiv 0 \, \operatorname{mod} 3.
	\end{align*}
As a further application, Theorem \ref{thm: cong 4}, we show that for $n\geq1$,
\begin{align*}
	\PD_{3}(2n)\equiv 0 \, \operatorname{mod} 3.
\end{align*}
Moreover, for all $n\geq0$ and $\ell\geq3$,
\begin{align*}
	\PD_{3^\ell}(27n+9)\equiv 0 \, \operatorname{mod} 3
\end{align*}
and for all $n\geq0$ and $\ell\neq2$,
\begin{align*}
	\PD_{3^\ell}(27n+18)\equiv 0 \, \operatorname{mod} 3.
\end{align*}
Finally, Corollary \ref{cor: cong 1D}, for all $n\geq0$, $k\ge 1$ and $\ell\geq 2k+1$, we show that
\begin{align*}
	\PD_{3^{\ell}}\big(3^{2k}(3n+1)\big)\equiv \PD_{3^{\ell}}\big(3^{2k}(3n+2)\big)\equiv 0 \, \operatorname{mod} 3.
\end{align*}

In Section \ref{concluding}, we conclude with some remarks and conjectured congruences.

%%%%%%%%%%%%%%%%%%%%%%%%%%%%%%%%%%%%%%
\section{General Notation and Basic Results}
%%%%%%%%%%%%%%%%%%%%%%%%%%%%%%%%%%%%%%

\begin{definition}
	Let
	$$ f_k = f_k (q) = (q^k; q^k)_\infty
	= \prod_{n \geq 1} (1 - q^{kn})
	= f_1(q^k), $$
	$$	\pd(q) = \sum_{n \geq 0} \PD(n) q^n, $$
	$$ \pd_k(q) = \sum_{n \geq 0} \PD_k(n) q^n, $$
	where, once again, $\PD(n)$ denotes the number of partitions of $n$ with designated summands (ordinary partitions with exactly one part designated among parts with equal size) and $\PD_k(n)$ denotes the number of partitions of $n$ with designated summands whose parts are not divisible by $k$.
\end{definition}

Using this notation, the generating functions can be written as
$$	\pd(q) 	= \frac{f_6(q)}{f_1(q) f_2(q) f_3(q)} $$
%As $\frac{(1-q^6)}{(1-q)(1-q^2)(1-q^3)} = 1 + \sum_{m \geq 1} mq^{m}$, we also have $$ \pd(q) = \prod_{n \geq 1} (1 + \sum_{m \geq 1} mq^{mn}). $$
and
$$ \pd_k(q) =  \frac{f_6(q)}{f_1(q) f_2(q) f_3(q)}
				\frac{f_k(q) f_{2k}(q) f_{3k}(q)}{f_{6k}(q)}
=  \frac{f_6(q)}{f_1(q) f_2(q) f_3(q)}
				\frac{f_1(q^k) f_{2}(q^k) f_{3}(q^k)}{f_{6}(q^k)}.$$
\begin{definition}
	Let
	$$
	g(q) = \frac{1}{\pd(q)}
	= \frac{f_1(q) f_{2}(q) f_{3}(q)}{f_{6}(q)}. $$
\end{definition}
We see that
\begin{equation} \label{pdk}
	\pd_k(q) = \frac{g(q^k)}{g(q)}.
\end{equation}
%As $\frac{(1-q)(1-q^2)(1-q^3)}{(1-q^6)} = 1 + \sum_{m \geq 1, \, 3 \nmid m} (-1)^{\lfloor \frac{m}{3} \rfloor} q^m$, we also have $$ g(q) =  \prod_{n \geq 1} (1 + \sum_{m \geq 1, \, 3 \nmid m} (-1)^{\lfloor \frac{m}{3} \rfloor} q^{mn}). $$

Modulo a prime, $p$, the Frobenius automorphism implies that
$$ f_{pk}(q) = f_k(q^p) \equiv f_k(q)^p \, \operatorname{mod} p$$
so that
$$ g(q^p) \equiv g(q)^p \, \operatorname{mod} p.$$
Therefore, Equation \eqref{pdk} gives us
\begin{equation} \label{pdk mod p}
	\pd_p(q) \equiv g(q)^{p-1} \, \operatorname{mod} p.
\end{equation}

Finally, observe that
\begin{equation} \label{pdk p to the ell}
		\pd_{p^\ell}(q) = \frac{g(q^{p^{\ell}})}{g(q)}=\prod_{m = 0}^{\ell - 1} \frac{g(q^{p^{m+1}})}{g(q^{p^{m}})} =
	\prod_{m = 0}^{\ell - 1} \pd_p(q^{p^m}). 	
\end{equation}

%%%%%%%%%%%%%%%%%%%%%%%%%%%%%%%%%%%%%%
\section{The Case of $\pd_{2^\ell}(q) \, \operatorname{mod} 2$}\label{sec: mod 2}
%%%%%%%%%%%%%%%%%%%%%%%%%%%%%%%%%%%%%%

The following can be written more succinctly, but is written to synchronize better with the general case which follows directly after.

\begin{lemma} \label{pd2 mod 2}
	Working modulo $2$,
	$$ \pd_2(q) \equiv g(q) = \frac{1}{\pd(q)}  \, \operatorname{mod} 2$$
	and
	$$ \pd_{2}(q) \equiv \sum_{n \geq0} a_{n1} q^n \, \operatorname{mod} 2$$
	where
	$$ a_{n 1} = \big|\{\text{solutions to } n = k_0^2 \, | \, k_0 \geq 0, \, 3 \nmid k_0 \text{ or } k_0=0\}\big|. $$
\end{lemma}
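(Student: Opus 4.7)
My plan is to obtain both congruences as immediate consequences of facts already established in the paper.

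For the first statement, I would apply equation~\eqref{pdk mod p} with $p = 2$ to obtain
$$\pd_2(q) \equiv g(q)^{2-1} = g(q) \, \operatorname{mod} 2,$$
and the equality $g(q) = 1/\pd(q)$ is simply the definition.

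For the second statement, I would first unwind the combinatorial count $a_{n1}$. Because the map $k_0 \mapsto k_0^2$ is injective on nonnegative integers, the indicated solution set has cardinality $0$ or $1$; it has cardinality $1$ exactly when $n = 0$, or when $n = k^2$ for some $k \geq 1$ with $3 \nmid k$. Therefore
$$\sum_{n \geq 0} a_{n1} q^n = 1 + \sum_{k \geq 1, \, 3 \nmid k} q^{k^2},$$
which is precisely the series congruent to $\pd_2(q)$ modulo $2$ by Theorem~\ref{pd2} of Andrews, Lewis, and Lovejoy. Combining this with the first congruence yields the second.

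I do not anticipate any real technical obstacle, since both halves of the lemma are repackagings of previously established results; the only care needed is the explicit combinatorial translation of $a_{n1}$. The reason to phrase the lemma in this somewhat verbose form, despite its immediacy, is to establish the base case $\ell = 1$ of the general counting function $a_{n\ell}$ that will drive the inductive proof of Theorem~\ref{thm: pd 2 to the ell} immediately afterward; writing $a_{n1}$ as a solution count (rather than as an indicator of ``$n$ is a square coprime to $3$, or $n=0$'') makes the transition to the multi-variable case $a_{n\ell}$ essentially notational.
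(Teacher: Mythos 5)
Your proposal is correct and matches the paper's proof essentially verbatim: the first congruence is obtained from Equation~\eqref{pdk mod p} with $p=2$, and the second is the observation that $\sum_{n\geq 0}a_{n1}q^n = 1+\sum_{k\geq 1,\,3\nmid k}q^{k^2}$ is a rephrasing of Theorem~\ref{pd2}. Your extra remark unpacking why $a_{n1}\in\{0,1\}$ is a slightly more explicit justification than the paper gives, but it is the same argument.
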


\begin{proof}
	Using $p = 2$ in Equation \eqref{pdk mod p}, we immediately get
	$$ \pd_2(q) \equiv g(q) = \frac{1}{\pd(q)}  \, \operatorname{mod} 2.$$

	The second statement is a rephrasing of Theorem \ref{pd2},
\begin{equation} \label{pd2 mod 2A}
	\qquad \pd_2(q) \equiv 1 + \sum_{k \geq 1, \, 3 \nmid k} q^{k^2} \, \operatorname{mod} 2. \qedhere
\end{equation}
\end{proof}

The general case is given by the following.

\begin{theorem} \label{thm: pd 2 to the ell}
	For all $n\geq0$ and $\ell \ge 1$,
	$$ \PD_{2^\ell}(n) \equiv a_{n\ell} \, \operatorname{mod} 2$$
	where
	$$ a_{n \ell} = \Big|\{\text{solutions to } n = \sum_{m = 0}^{\ell - 1} 2^m k_m^2 \, | \, k_m \geq 0, \, 3 \nmid k_m \text{ or } k_m=0\}\Big|. $$
\end{theorem}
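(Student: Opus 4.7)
The plan is to combine Equation \eqref{pdk p to the ell} at $p=2$ with the mod $2$ description of $\pd_2$ from Lemma \ref{pd2 mod 2}, and then simply identify the coefficient of $q^n$ in the resulting product with the counting function $a_{n\ell}$.

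First, I would specialize \eqref{pdk p to the ell} at $p=2$ to obtain
$$ \pd_{2^\ell}(q) = \prod_{m=0}^{\ell-1} \pd_2(q^{2^m}). $$
Next, reducing modulo $2$ and applying Lemma \ref{pd2 mod 2} (in the form \eqref{pd2 mod 2A}) to each factor gives
$$ \pd_{2^\ell}(q) \equiv \prod_{m=0}^{\ell-1} \Bigl( 1 + \sum_{k_m \geq 1,\ 3 \nmid k_m} q^{2^m k_m^2} \Bigr) \, \operatorname{mod} 2. $$

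Then I would expand the product. A monomial $q^n$ in the expansion is obtained by choosing, for each $m \in \{0,1,\dots,\ell-1\}$, either the constant $1$ (which I encode by setting $k_m = 0$) or a single term $q^{2^m k_m^2}$ with $k_m \geq 1$ and $3 \nmid k_m$. The total exponent is $\sum_{m=0}^{\ell-1} 2^m k_m^2$, and the coefficient of $q^n$ is therefore the number of tuples $(k_0,\dots,k_{\ell-1})$ with $k_m \geq 0$, either $k_m = 0$ or $3 \nmid k_m$, and $n = \sum_{m=0}^{\ell-1} 2^m k_m^2$. This count is exactly $a_{n\ell}$, completing the proof.

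I do not foresee a real obstacle; the argument is essentially a clean combinatorial expansion once \eqref{pdk p to the ell} and Lemma \ref{pd2 mod 2} are in hand. The only subtle point is making the encoding $k_m = 0 \leftrightarrow$ "take the $1$" explicit, which is why the definition of $a_{n\ell}$ was stated with the slightly redundant clause "$3 \nmid k_m$ or $k_m = 0$"; this was foreshadowed in the remark preceding Lemma \ref{pd2 mod 2} that the $\ell = 1$ statement was written in a form that synchronizes with the general case.
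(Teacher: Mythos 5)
Your proposal is correct and is essentially identical to the paper's own proof: both specialize Equation \eqref{pdk p to the ell} at $p=2$, apply \eqref{pd2 mod 2A} to each factor, and read off the coefficient of $q^n$ from the expanded product. You simply spell out the coefficient-extraction step (with the $k_m=0 \leftrightarrow$ ``take the $1$'' encoding) that the paper leaves as ``the result follows.''
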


\begin{proof}
	Using Equations \eqref{pdk p to the ell} and \eqref{pd2 mod 2A}, we see that
	$$ \pd_{2^\ell}(q) =\prod_{m = 0}^{\ell - 1} \pd_2(q^{2^m})
	\equiv \prod_{m = 0}^{\ell - 1} \Big( 1 + \sum_{k \geq 1, \, 3 \nmid k} q^{2^m k^2}\Big) \, \operatorname{mod} 2. $$
	The result follows.
\end{proof}

As a first easy application of Theorem \ref{thm: pd 2 to the ell}, we prove the following newly observed congruences of the form $\PD_{2^\ell}(32n+r)$.

\begin{theorem}\label{thm: cong 1A}
For $n\geq0$ and $r \in\{5,7,10,13,14,15,20,21,23,26,28,29,30,31\}$,
\begin{align*}
\PD_4(32n+r)\equiv 0 \, \operatorname{mod} 2,
\end{align*}
and for $n\geq0$,
\begin{align*}
\PD_{8}(32n+24)\equiv 0 \, \operatorname{mod} 2.
\end{align*}
\end{theorem}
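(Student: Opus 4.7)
The plan is to invoke Theorem~\ref{thm: pd 2 to the ell} to translate both families into statements about the parity of the admissible representation counts $a_{n2}$ and $a_{n3}$. Concretely, for the first fourteen congruences we must show that no admissible $(k_0, k_1)$ satisfies $32n + r = k_0^2 + 2k_1^2$ for $r$ in the listed set, and for the second congruence that the admissible solutions to $32n + 24 = k_0^2 + 2k_1^2 + 4k_2^2$ come in pairs.

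For the $\PD_4$ case, the approach is a direct residue tabulation modulo $32$. Since squares modulo $32$ lie in $\{0, 1, 4, 9, 16, 17, 25\}$ and $2k_1^2$ modulo $32$ lies in $\{0, 2, 8, 18\}$, a tabulation of the $28$ possible sums shows that the image of $(k_0, k_1) \mapsto k_0^2 + 2k_1^2$ modulo $32$ is exactly
$$ \{0, 1, 2, 3, 4, 6, 8, 9, 11, 12, 16, 17, 18, 19, 22, 24, 25, 27\}, $$
whose complement in $\mathbb{Z}/32\mathbb{Z}$ is precisely the list of fourteen residues in the theorem. Hence $a_{(32n+r)2} = 0$ for those $r$, even before imposing the side condition ``$k_m = 0$ or $3 \nmid k_m$'', and the first congruence follows at once.

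The $\PD_8(32n+24)$ case is more delicate because $24$ is representable modulo $32$. Repeating the tabulation with the extra term $4k_2^2 \in \{0, 4, 16\}$ shows that the only solutions of $k_0^2 + 2k_1^2 + 4k_2^2 \equiv 24 \pmod{32}$ fall into two residue classes: Type A with $(k_0^2, 2k_1^2, 4k_2^2) \equiv (16, 8, 0)$, equivalently $k_0 \equiv 4 \pmod 8$, $k_1 \equiv 2 \pmod 4$, $k_2 \equiv 0 \pmod 4$; and Type B with $(0, 8, 16)$, equivalently $k_0 \equiv 0 \pmod 8$, $k_1 \equiv 2 \pmod 4$, $k_2 \equiv 2 \pmod 4$. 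The key step is to define the involution
$$ (k_0, k_1, k_2) \longmapsto (2k_2,\, k_1,\, k_0/2), $$
which preserves the value $k_0^2 + 2k_1^2 + 4k_2^2$ algebraically and swaps Type A with Type B by the residue descriptions above; moreover, multiplying or dividing by $2$ preserves both nonnegativity and the condition ``zero or not divisible by $3$'', so admissibility carries over in both directions. A fixed point would force $k_0 = 2k_2$, which is incompatible with either Type A or Type B, so admissible solutions pair strictly and $a_{(32n+24)3}$ is even.

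The main potential obstacle is not the algebra but the bookkeeping: carrying out the finite residue tabulations modulo $32$ without error, and verifying carefully that the involution preserves the side condition in every corner case, including $k_0 = 0$ in Type B and $k_2 = 0$ in Type A where the swap moves zeros between coordinates. Once these checks are in place, the two congruences drop out of Theorem~\ref{thm: pd 2 to the ell} immediately.
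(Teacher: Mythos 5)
Your proposal is correct and follows essentially the same route as the paper: the first family is exactly the paper's observation that no residue in the given list is of the form $k_0^2+2k_1^2$ modulo $32$, and for the second family you use the identical pairing involution $(k_0,k_1,k_2)\mapsto(2k_2,k_1,k_0/2)$ on solutions of $32n+24=k_0^2+2k_1^2+4k_2^2$, with divisibility by $3$ and nonnegativity preserved since the map only multiplies and divides coordinates by $2$. The only cosmetic difference is how fixed points are excluded: you show the involution strictly swaps two residue types modulo $32$, whereas the paper reduces a would-be fixed point to $4n+3=a^2+b^2$ and invokes the quadratic residues modulo $4$; both checks are sound.
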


\begin{proof}
The first congruence is immediate as the only quadratic residues modulo $32$ are $0, 1, 4, 9, 16, 17,$ and $25$.

For the second congruence, we need to pair off solutions $(k_0,k_1,k_2)$ of the equation
\begin{align}\label{32n+24,2}
32n+24=k_0^2+2k_1^2+4k_2^2.
\end{align}
Note that we have $k_0^2 \equiv 0$ mod $2$, thus $2\mid k_0$ and we can write $k_0 = 2a$
for some integer $a$. This gives
\[32n+24=4a^2+2k_1^2+4k_2^2.\]
Note that with $(2a,k_1,k_2)$ also $(2k_2,k_1,a)$ is a solution
of \eqref{32n+24,2}, and for $k_2 \ne a$ we will pair these two solutions off. This leaves us with considering
solutions of the form $(2a,k_1,a)$ and the equation
\[32n+24=4a^2+2k_1^2+4a^2 =8a^2+ 2k_1^2.\]
Note that we have $2k_1^2 \equiv 0$ mod $4$, thus $2\mid k_1$ and we can write $k_1 = 2b$
for some integer $b$. This gives the equation
\[4n+3=a^2+b^2\]
which is not satisfiable as the only quadratic residues mod $4$ are $0$ and $1$. Thus, solutions of the form $(2a,k_1,a)$
do not exist and the pairing is complete.
\end{proof}

Our next result expands on and iterates the idea of pairing off solutions.

\begin{theorem}\label{thm: cong 1B}
For $n\geq0$, $\ell \ge 3$, and $0\le s < 2^{\ell-1}$,
\begin{align*}
\PD_{2^\ell}(2^\ell n+2s)\equiv 1  \, \operatorname{mod}  2
\end{align*}
implies that $s$ is a quadratic residue modulo $2^{\ell-1}$. Equivalently, if $0\leq s < 2^{\ell-1}$ and $s$ is quadratic nonresidue modulo $2^{\ell-1}$, then
$$ PD_{2^\ell}(2^\ell n+2s)\equiv 0  \, \operatorname{mod}  2.$$

Moreover, for all $n\geq0$, $\ell \ge 2$, we have
\begin{align*}
\PD_{2^\ell}(2n)\equiv a^*_{n \ell}  \, \operatorname{mod}  2
\end{align*}
where
	$$ a^*_{n \ell} = \big|\{\text{solutions to } n = k_0^2+2^{\ell-1} k_1^2 \, | \, k_m \geq 0, \, 3 \nmid k_m \text{ or } k_m=0\}\big|. $$
\end{theorem}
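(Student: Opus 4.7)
The plan is to invoke Theorem~\ref{thm: pd 2 to the ell} and reduce both claims to an iterated involution argument that generalizes the pairing technique introduced in Theorem~\ref{thm: cong 1A}. By that theorem, $\PD_{2^\ell}(2M) \pmod 2$ equals the parity of the number of tuples $(k_0, k_1, \ldots, k_{\ell-1})$ satisfying $2M = \sum_{m=0}^{\ell-1} 2^m k_m^2$ subject to the usual constraint $k_m \ge 0$ with $3 \nmid k_m$ or $k_m = 0$. Since the left side is even and $k_0^2$ is the only summand with odd coefficient, $k_0$ must be even; writing $k_0 = 2a_0$ and dividing by $2$ yields
$$ M = k_1^2 + 2a_0^2 + 2k_2^2 + 4k_3^2 + \cdots + 2^{\ell-2} k_{\ell-1}^2, $$
where $a_0$ inherits the same constraints as $k_0$ (since $3 \nmid 2a_0$ iff $3 \nmid a_0$).

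Next, I define a sequence of involutions on the solution set. First swap $a_0 \leftrightarrow k_2$; both variables share identical constraints, so this is a well-defined involution, and off the diagonal all solutions pair off. Modulo $2$, only the fixed points $a_0 = k_2$ survive, giving $M = k_1^2 + 4a_0^2 + 4k_3^2 + 8k_4^2 + \cdots + 2^{\ell-2} k_{\ell-1}^2$, in which $a_0$ and $k_3$ now share coefficient $4$. Iterating by successively swapping $a_0$ with $k_3, k_4, \ldots, k_{\ell-1}$ (a total of $\ell - 2$ involutions, each doubling the coefficient of $a_0^2$ at its fixed points), the surviving equation is
$$ M = k_1^2 + 2^{\ell-1} a_0^2, $$
with $a_0, k_1$ still subject to the constraint. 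Hence $\PD_{2^\ell}(2M) \equiv \big|\{(a_0, k_1) : M = k_1^2 + 2^{\ell-1} a_0^2, \, \text{constraints}\}\big| \pmod 2$.

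Applying this reduction with $M = n$ and relabeling $(a_0, k_1) \mapsto (k_1, k_0)$ directly gives $\PD_{2^\ell}(2n) \equiv a^*_{n\ell} \pmod 2$, establishing the \emph{Moreover} part. For the first claim, take $M = 2^{\ell-1} n + s$: if $\PD_{2^\ell}(2^\ell n + 2s) \equiv 1 \pmod 2$, the count on the right is odd, hence positive, so some $(a_0, k_1)$ satisfies $2^{\ell-1} n + s = k_1^2 + 2^{\ell-1} a_0^2$; reducing modulo $2^{\ell-1}$ yields $s \equiv k_1^2 \pmod{2^{\ell-1}}$, proving $s$ is a quadratic residue. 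The main obstacle I expect is purely bookkeeping: verifying that each involution is well-defined by checking that paired variables share identical constraints, and carefully tracking the doubling of the $a_0^2$ coefficient through the $\ell - 2$ successive reductions.
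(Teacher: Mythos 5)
Your proposal is correct and follows essentially the same route as the paper: reduce to counting representations $2M=\sum_m 2^m k_m^2$, force $k_0=2a$, and iteratively pair off solutions by swapping $a$ with $k_2,k_3,\ldots,k_{\ell-1}$ (each fixed-point step doubling the coefficient of $a^2$) until only $M=k_1^2+2^{\ell-1}a^2$ survives. The only cosmetic differences are that you divide by $2$ before pairing and derive the quadratic-residue claim from the \emph{Moreover} part rather than the other way around.
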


\begin{proof}
We will be pairing off the solutions $(k_0,k_1,k_2,k_3,\ldots,k_{\ell-1})$ of the equation
\begin{align}\label{2l n+m A}
2^\ell n+2s = k_0^2 + 2 k_1^2 + 4 k_2^2 + 8 k_3^2 + 16 k_4^2 +\ldots + 2^{\ell-1} k_{\ell -1}^2.
\end{align}
Note that we have $k_0^2 \equiv 0$ mod $2$, thus $2\mid k_0$ and we can write $k_0 = 2a$
for some integer $a$. This gives
\[
2^\ell n+2s = 4a^2 + 2 k_1^2 + 4 k_2^2 + 8 k_3^2 + 16 k_4^2 +\ldots + 2^{\ell-1} k_{\ell -1}^2.
\]
Note that with $(2a,k_1,k_2,k_3,\ldots,k_{\ell-1})$ also $(2k_2,k_1,a,k_3,\ldots,k_{\ell-1})$ is a solution
of \eqref{2l n+m A}, and for $k_2 \ne a$ we will pair these two solutions off. This leaves us with considering
solutions of the form $(2a,k_1,a,k_3,\ldots,k_{\ell-1})$ and the equation
\[
2^\ell n+2s = 4a^2 + 2 k_1^2 + 4 a^2 + 8 k_3^2 + 16 k_4^2 +\ldots + 2^{\ell-1} k_{\ell -1}^2.
\]
Note that with $(2a,k_1,a,k_3,k_4\ldots,k_{\ell-1})$ also $(2k_3,k_1,k_3,a,k_4\ldots,k_{\ell-1})$ is a solution
of \eqref{2l n+m A}, and for $k_3 \ne a$ we will pair these two solutions off. This leaves us with considering
solutions of the form $(2a,k_1,a,a,k_4\ldots,k_{\ell-1})$ and the equation
\[ 2^\ell n+2s = 4a^2 + 2 k_1^2 + 4 a^2 + 8 a^2 + 16 k_4^2 +\ldots + 2^{\ell-1} k_{\ell -1}^2.\]
This process can be iterated until we are left to consider solutions of the form $(2a,k_1,a,a,\ldots,a)$
and the equation
\[ 2^\ell n+2s = 4a^2 + 2 k_1^2 + 4 a^2 + 8 a^2 + 16 a^2 +\ldots + 2^{\ell-1} a^2 = 2k_1^2 + 2^\ell a^2.\]
In particular,
\[ 2^{\ell-1} n+s = k_1^2 + 2^{\ell-1} a^2.\]
Thus, $s \equiv k_1^2$ mod $2^{\ell-1}$ and the first part of the theorem follows. The second part of the theorem is now easy.
\end{proof}

We continue with a general auxiliary result.

\begin{lemma}\label{lem: cong 1C}
If there exist $j$ and $r$ such that the congruence
\begin{align*}
\PD_{2^j}(2^j n+r)\equiv0  \, \operatorname{mod}  2
\end{align*}
holds for all $n\ge 0$, then
\begin{align*}
\PD_{2^{\ell}}(2^j n+r)\equiv0  \, \operatorname{mod}  2
\end{align*}
for all $n\ge 0$ and $\ell \ge j$.
\end{lemma}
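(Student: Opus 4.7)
The plan is to exploit the multiplicative decomposition of $\pd_{2^\ell}(q)$ given by Equation \eqref{pdk p to the ell} in order to express $\pd_{2^\ell}(q)$ as $\pd_{2^j}(q)$ times a power series supported on exponents divisible by $2^j$. Specifically, since
$$\pd_{2^\ell}(q) = \prod_{m=0}^{\ell - 1} \pd_2(q^{2^m}) = \Bigg(\prod_{m=0}^{j-1} \pd_2(q^{2^m})\Bigg) \Bigg(\prod_{m=j}^{\ell-1} \pd_2(q^{2^m})\Bigg) = \pd_{2^j}(q) \cdot F(q^{2^j}),$$
where $F(Q) := \prod_{m=0}^{\ell - j - 1} \pd_2(Q^{2^m})$ is a formal power series in $Q$.

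Next I would compare coefficients. Writing $F(q^{2^j}) = \sum_{k \geq 0} c_k q^{2^j k}$, the coefficient of $q^{2^j n + r}$ in the product expansion yields
$$\PD_{2^\ell}(2^j n + r) = \sum_{k = 0}^{n} c_k \, \PD_{2^j}\big(2^j (n - k) + r\big),$$
since the condition $0 \le r < 2^j$ (which may be assumed without loss of generality, as any larger $r$ can be absorbed by shifting $n$) forces $k \le n$. By the hypothesis, every term $\PD_{2^j}(2^j (n-k) + r)$ on the right-hand side is $\equiv 0 \, \operatorname{mod} 2$, and therefore $\PD_{2^\ell}(2^j n + r) \equiv 0 \, \operatorname{mod} 2$.

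I do not anticipate a genuine obstacle here; the lemma is essentially a structural observation about how the factor $\pd_{2^j}(q)$ appears inside $\pd_{2^\ell}(q)$, together with the fact that the complementary factor only contributes monomials whose exponents are multiples of $2^j$. The only mild care needed is to justify that we may assume $0 \le r < 2^j$ when passing between the congruence classes, and to note that the sum defining the coefficient of $q^{2^j n + r}$ is a finite sum, so the mod-$2$ reduction commutes with it.
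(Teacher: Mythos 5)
Your proof is correct and is essentially the paper's argument: both rest on splitting $\pd_{2^\ell}(q)=\pd_{2^j}(q)\cdot F(q^{2^j})$ with $F$ a power series in $q^{2^j}$ (the paper phrases this as decomposing the solution count $a_{2^jn+r,\ell}$ into a sum of values $a_{sj}$ with $s\equiv r \bmod 2^j$) and observing that every coefficient of $\pd_{2^j}$ that can contribute to $q^{2^jn+r}$ lies in the residue class of $r$ modulo $2^j$. Working from the exact factorization in Equation \eqref{pdk p to the ell} before reducing mod $2$ is, if anything, slightly cleaner, and your caveat that one should take $0\le r<2^j$ is a point the paper leaves implicit.
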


\begin{proof}
Let $j,\ell,n$ and $r$ be fixed. With
\[ L = \Big\{\text{solutions to } 2^j n+r = s +\sum_{m = j}^{\ell - 1} 2^m k_m^2 \, | \, s, k_m \geq 0, \, 3 \nmid k_m \text{ or } k_m=0\Big\} \]
we have
\[ a_{2^j n+r, \ell} = \sum \big\{a_{sj} \, | \, (s,k_j,k_{j+1},\ldots,k_{\ell-1})\in L \big\}, \]
where $s \equiv r$ mod $2^j$ for all $(s,k_j,k_{j+1},\ldots,k_{\ell-1})\in L$.  Hence,
\[ a_{sj} \equiv \PD_{2^j}(s)\equiv0  \, \operatorname{mod} 2\]
for all $(s,k_j,k_{j+1},\ldots,k_{\ell-1})\in L$, thus
\[\PD_{2^{\ell}}(2^j n+r) \equiv a_{2^j n+r, \ell}\equiv0  \, \operatorname{mod} 2.\qedhere \]
\end{proof}

As a demonstration of how Theorem \ref{thm: cong 1B} and Lemma \ref{lem: cong 1C} combine to efficiently prove congruences,
we provide the following corollary. Some of these results were proven with dissections in \cite[Equations (3), (5), and (8)]{dS20a}.

\begin{corollary}\label{cor: cong 1D 3.6}
	Let $n\geq 0$.
	For $\ell \geq 3$ and $r\in\{4, 6\}$,
	$$ \PD_{2^\ell}(8n+r) \equiv 0 \, \operatorname{mod}  2.$$
	For $\ell \geq 4$ and  $r\in\{4,6,10,12,14\}$
	$$ \PD_{2^\ell}(16n+r) \equiv 0 \, \operatorname{mod}  2.$$
	For $\ell \geq 5$ and
	$r\in\{4,6,10,12,14,16,20,22,24,26,28,30\}$
	$$ \PD_{2^\ell}(32n+r) \equiv 0 \, \operatorname{mod}  2.$$
	In general for $\ell \geq j\geq 3$ and $0\leq s < 2^{j-1}$ with $s$ not of the form $0$ or $4^a (8b +1)$ for some $a,b\geq 0$,
	$$ \PD_{2^\ell}(2^j n+2s) \equiv 0 \, \operatorname{mod}  2.$$
\end{corollary}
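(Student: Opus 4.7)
The plan is to combine Theorem \ref{thm: cong 1B} (applied at level $j$ rather than at level $\ell$) with Lemma \ref{lem: cong 1C}, and then to translate the condition ``$s$ is a quadratic nonresidue modulo $2^{j-1}$'' into the arithmetic form stated in the corollary.

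First I would invoke the second half of Theorem \ref{thm: cong 1B} at $\ell = j$: for any $j \geq 3$ and any $s$ with $0 \leq s < 2^{j-1}$ which is a quadratic nonresidue modulo $2^{j-1}$, one already has
\[ \PD_{2^j}(2^j n + 2s) \equiv 0 \, \operatorname{mod} 2 \]
for every $n \geq 0$. Lemma \ref{lem: cong 1C} then propagates this vanishing upward to every $\ell \geq j$, yielding $\PD_{2^\ell}(2^j n + 2s) \equiv 0 \, \operatorname{mod} 2$, which is the content of the general statement.

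The only remaining task is to identify the quadratic nonresidues in the explicit form used above. This is a classical fact of elementary number theory: for $k \geq 3$, the squares in $\mathbb{Z}/2^k\mathbb{Z}$ are exactly $0$ together with the integers of the form $4^a(8b+1)$ lying in $\{1, \ldots, 2^k - 1\}$. Hence the hypothesis that $s \in \{0, 1, \ldots, 2^{j-1}-1\}$ is \emph{not} of the form $0$ or $4^a(8b+1)$ is precisely the condition that $s$ is a quadratic nonresidue modulo $2^{j-1}$, which establishes the general claim.

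The three enumerated special cases are then immediate tabulations. For $j = 3$, the squares modulo $4$ are $\{0, 1\}$, so the nonresidues are $\{2, 3\}$, giving $2s \in \{4, 6\}$. For $j = 4$, the squares modulo $8$ are $\{0, 1, 4\}$, so $s \in \{2, 3, 5, 6, 7\}$ yields $2s \in \{4, 6, 10, 12, 14\}$. For $j = 5$, the squares modulo $16$ are $\{0, 1, 4, 9\}$, so the remaining $s$ give $2s \in \{4, 6, 10, 12, 14, 16, 20, 22, 24, 26, 28, 30\}$. There is no real obstacle here: the main point is noticing that Theorem \ref{thm: cong 1B} can be applied at the intermediate level $j$ and then lifted by Lemma \ref{lem: cong 1C}, with the arithmetic reformulation being standard.
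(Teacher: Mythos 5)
Your proposal is correct and follows essentially the same route as the paper: apply Theorem \ref{thm: cong 1B} at level $j$ to get $\PD_{2^j}(2^j n+2s)\equiv 0 \bmod 2$ for quadratic nonresidues $s$ modulo $2^{j-1}$, lift to all $\ell\ge j$ via Lemma \ref{lem: cong 1C}, and identify the residues as $0$ and $4^a(8b+1)$. Your tabulations of the three special cases also match the paper's.
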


\begin{proof}
	We apply Theorem \ref{thm: cong 1B} first with $\ell=3$. The quadratic nonresidues modulo~$4$ are $2$ and $3$. Thus for $n\geq 0$ and $r\in\{4, 6\}$,
	$$ \PD_8(8n+r) \equiv 0 \, \operatorname{mod}  2.$$
	
	For $\ell=4$, the quadratic nonresidues modulo $8$ are $2,3,5,6,$ and $7$. So for $r\in\{4,6,10,12,14\}$
	$$ \PD_{16}(16n+r) \equiv 0 \, \operatorname{mod}  2.$$
	
	For $\ell=5$, the quadratic nonresidues modulo $16$ are $2,3,5,6,7,8,10,11,12,13,14,$ and $15$. So for
	$r\in\{4,6,10,12,14,16,20,22,24,26,28,30\}$
	$$ \PD_{32}(32n+r) \equiv 0 \, \operatorname{mod}  2.$$
	
	In general, for $j \geq 3$, the quadratic residues mod $2^{j-1}$ are $0$ and all numbers of the form $4^a(8b+1)$ for $a,b\geq 0$. So if $0\leq s < 2^{j-1}$ is not of this form, then
	$$ \PD_{2^j}(2^j n+2s) \equiv 0 \, \operatorname{mod}  2.$$
	
	The rest follows from Lemma \ref{lem: cong 1C}.
\end{proof}

The function $ \pd_2(q) \equiv \frac{1}{\pd(q)}  \, \operatorname{mod} 2$ is well known. We end this section on an interesting new interpretation of its reciprocal.

\begin{theorem}\label{thm: odd mult}
	Working modulo $2$,
	$$ \pd(q) \equiv \frac{1}{\pd_2(q)} \equiv 1 + \sum_{n \geq 1} b_n q^n \, \operatorname{mod} 2$$
	where
	$$ b_n =\big|\{ \text{partitions of } n \, | \, \text{the multiplicity of each part is odd}\} \big|.$$
\end{theorem}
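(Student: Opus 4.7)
The first equivalence $\pd(q) \equiv 1/\pd_2(q) \pmod 2$ is immediate: Lemma \ref{pd2 mod 2} gives $\pd_2(q) \equiv g(q) = 1/\pd(q) \pmod 2$, and inverting a power series with constant term $1$ commutes with reduction mod $2$. So the content of the theorem is the combinatorial identification of $\pd(q) \pmod 2$ with the generating function for partitions with odd multiplicities.

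My plan is to show that both $\pd(q)$ and $\sum_n b_n q^n$ reduce modulo $2$ to the same simple product, namely $f_3/(f_1 f_2)$. For $\pd(q)$ this is a one-line Frobenius calculation: since $(1-x)^2 \equiv 1-x^2 \pmod 2$, we have $f_3(q)^2 \equiv f_6(q) \pmod 2$, so
$$ \pd(q) = \frac{f_6}{f_1 f_2 f_3} \equiv \frac{f_3^2}{f_1 f_2 f_3} = \frac{f_3}{f_1 f_2} \pmod 2. $$

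For $b_n$, I would write the generating function by choosing, for each positive integer $n$, either no copies of $n$ or an odd number of copies:
$$ 1 + \sum_{n \geq 1} b_n q^n = \prod_{n \geq 1} \Big(1 + q^n + q^{3n} + q^{5n} + \cdots\Big) = \prod_{n \geq 1} \frac{1 + q^n - q^{2n}}{1 - q^{2n}}, $$
where I use the telescoping identity $(1 - q^{2n})(1 + q^n + q^{3n} + \cdots) = 1 + q^n - q^{2n}$. Reducing modulo $2$, the numerator becomes $1 + q^n + q^{2n} = (1 - q^{3n})/(1 - q^n)$, and hence
$$ 1 + \sum_{n \geq 1} b_n q^n \equiv \prod_{n \geq 1} \frac{1 - q^{3n}}{(1-q^n)(1-q^{2n})} = \frac{f_3}{f_1 f_2} \pmod 2. $$

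Matching the two reductions completes the proof. The only step requiring any care is the derivation of the closed form for the odd-multiplicity generating function; the rest is essentially a restatement of Frobenius combined with the already-established identity $\pd_2(q) \equiv 1/\pd(q) \pmod 2$. There is no real obstacle — the result is a clean product-manipulation identity whose interest lies in the combinatorial interpretation it provides for the mod-$2$ behavior of $\PD(n)$.
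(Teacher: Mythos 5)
Your proof is correct and is essentially the paper's argument: both hinge on the Frobenius reduction $f_6 \equiv f_3^2 \pmod 2$ together with recognizing $\prod_{n\ge 1}\bigl(1+q^n+q^{3n}+q^{5n}+\cdots\bigr)$ as the odd-multiplicity generating function. The paper matches the two products factor by factor via $\frac{1-q^{6n}}{(1-q^n)(1-q^{2n})(1-q^{3n})} \equiv 1+\sum_{k\ge 0}q^{(2k+1)n} \pmod 2$, while you route both sides through the common intermediate $f_3/(f_1 f_2)$ and handle the (trivial) inversion step $\pd(q)\equiv 1/\pd_2(q)$ explicitly --- a purely organizational difference.
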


\begin{proof}
	Observe that
	\begin{eqnarray*}
\frac{1-q^6}{(1-q)(1-q^2)(1-q^3)} & \equiv & \frac{(1-q^3)^2}{(1-q)(1-q^2)(1-q^3)}\\
& = & \frac{1+q+q^2}{1-q^2} \equiv 1 + \sum_{k \geq 0} q^{2k+1} \, \operatorname{mod} 2.
	\end{eqnarray*}
	Using the above equation in conjunction with the definition of $\pd(q)$ in terms of the $f_k$, we get
	$$ \pd(q) \equiv \prod_{n \geq 1} \Big(1 + \sum_{k \geq 0} q^{(2k+1)n}\Big) \, \operatorname{mod} 2$$
	and the result follows.	
\end{proof}

Theorem \ref{thm: odd mult} provides the hitherto undocumented congruence
$$ \PD (n) \equiv b_n \, \operatorname{mod} 2,$$
which links the number $\PD (n)$ of partitions of $n$ with designated summands \cite[A077285]{OEIS}
to the number $b_n$ of partitions of $n$ with odd multiplicities \cite[A055922]{OEIS}.
This may be of use in understanding the parity of partitions with odd multiplicities as studied in \cite{HS18}, \cite{KZ20}, and \cite{SS20}.

%%%%%%%%%%%%%%%%%%%%%%%%%%%%%%%%%%%%%%
\section{An Alternate Characterization of $\pd_4(q) \, \operatorname{mod} 2$} \label{sec: pd4}
%%%%%%%%%%%%%%%%%%%%%%%%%%%%%%%%%%%%%%

In the case of $\ell = 2$, Theorem \ref{thm: pd 2 to the ell} says
$$ \PD_{4}(n) \equiv a_{n2} \, \operatorname{mod} 2$$
where
$$ a_{n 2} = \big|\{\text{solutions to } n = k_0^2 + 2 k_1^2  \, | \, k_m \geq 0, \, 3 \nmid k_m \text{ or } k_m =0 \}\big|. $$
In this section we give a remarkably explicit formula for $a_{n 2} \, \operatorname{mod} 2$ together with a combinatorial proof.
A dissection proof for the same result is provided in Section~\ref{sec: dissec}.

\begin{theorem}\label{thm: pd4}
	For all $n\geq 0$,
	\begin{align*}
		a_{n 2}\equiv\begin{cases} 1 \, \operatorname{mod} 2, & \text{if } n=m k^2 \text{ for } m,k\in\mathbb{Z}_{\geq 0} \text{ with } m \mid 6,\\
			0 \, \operatorname{mod} 2, & \text{otherwise}.
		\end{cases}
	\end{align*}
\end{theorem}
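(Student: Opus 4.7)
My approach is to reduce the problem by successive descents on the prime factorization of $n$, using the interpretation of $k_0^2 + 2 k_1^2$ as the norm form of $\mathbb{Z}[\sqrt{-2}]$, in which the prime $3$ splits as $3 = (1+\sqrt{-2})(1-\sqrt{-2})$. By Theorem \ref{thm: pd 2 to the ell}, $a_{n,2} = |T_n|$ where $T_n = \{(k_0, k_1) \in S \times S : k_0^2 + 2 k_1^2 = n\}$ and $S = \{0\} \cup \{k \geq 1 : 3 \nmid k\}$; the goal is to show $|T_n|$ is odd if and only if $n = m k^2$ for some $m \mid 6$ and $k \geq 0$.

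First, for the 2-descent: when $n$ is even, every $(k_0, k_1) \in T_n$ has $k_0$ even, and the map $(k_0, k_1) \mapsto (k_1, k_0/2)$ gives a bijection $T_n \to T_{n/2}$, since $3 \mid k_0$ if and only if $3 \mid k_0/2$ when $k_0$ is even. Hence $a_{n,2} = a_{n/2, 2}$ exactly, reducing the problem to odd $n$.

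Next, the key step is a 3-descent modulo 2: for $n > 0$ with $3 \mid n$, I would show $a_{n,2} \equiv a_{n/3, 2} \pmod 2$. A residue analysis forces every $(k_0, k_1) \in T_n$ to have $k_0, k_1 > 0$ coprime to $3$ with $k_0 \equiv \pm k_1 \pmod 3$; dividing $k_0 + k_1 \sqrt{-2}$ by the appropriate factor $1 \mp \sqrt{-2}$ produces an element of norm $n/3$ in $\mathbb{Z}[\sqrt{-2}]$. Equivalently, each $(k_0', k_1') \in \mathbb{Z}_{\geq 0}^2$ representing $n/3$ has two ascents via multiplication by $1 + \sqrt{-2}$ and $1 - \sqrt{-2}$, and a case analysis modulo $3$ shows that the number of ascents landing in $S \times S$ (after sign-normalization to $\mathbb{Z}_{\geq 0}^2$) is: $1$ if $(k_0', k_1') \in T_{n/3}$; $2$ if exactly one of $k_0', k_1'$ is a positive multiple of $3$ while the other is positive and in $S$; and $0$ otherwise (both positive multiples of $3$, or one zero and the other a positive multiple of $3$). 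Summing fibers gives $|T_n| = |T_{n/3}| + 2M$ for some $M \geq 0$, so $|T_n| \equiv |T_{n/3}| \pmod 2$. This case analysis, especially the boundary cases where some $k_i' = 0$, is the main technical obstacle.

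Third, in the base case $\gcd(n, 6) = 1$: since $k^2 \equiv 0, 1 \pmod 3$, the congruence $k_0^2 + 2 k_1^2 \equiv k_0^2 - k_1^2 \pmod 3$ forces either $k_1 \equiv 0 \pmod 3$ (if $n \equiv 1 \pmod 3$), whence $k_1 = 0$ since $k_1 \in S$, giving $n = k_0^2$; or $k_0 \equiv 0 \pmod 3$ (if $n \equiv 2 \pmod 3$), whence $k_0 = 0$ and $n = 2 k_1^2$, impossible for odd $n$. Either way, $a_{n,2} \equiv [n \text{ is a perfect square}] \pmod 2$. Combining the three descents, for $n = 2^a 3^b n'$ with $\gcd(n', 6) = 1$, we get $a_{n,2} \equiv a_{n', 2} \equiv 1 \pmod 2$ if and only if $n'$ is a perfect square, if and only if $n = 2^a 3^b k^2$ for some $k \geq 0$, equivalently $n = m k^2$ for some $m \mid 6$ and $k \geq 0$ by absorbing even powers of $2$ and $3$ into $k^2$.
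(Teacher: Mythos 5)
Your proposal is correct, and it takes a genuinely different route from the paper. The paper also disposes of $n\equiv 1,2\pmod 3$ by the residue argument in your base case (so it never needs your exact $2$-descent $a_{n,2}=a_{n/2,2}$, which is a nice observation in its own right), but for $3\mid n$ it stays at the level of $n$ itself: it enlarges the solution set to all of $\mathbb{Z}^2$ modulo signs and pairs off sign-classes using the norm-preserving involution $\Psi(a,b)=\left(\frac{a+4b}{3},\frac{2a-b}{3}\right)$, which modulo the sign equivalence is multiplication by the norm-one element $(1-\sqrt{-2})/(1+\sqrt{-2})$, and then reads off the exceptional unpaired classes directly as $n=k^2,2k^2,3k^2,6k^2$. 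Your version replaces this pairing by an induction on $n$ via descent along the split prime $3=(1+\sqrt{-2})(1-\sqrt{-2})$, which buys a cleaner global statement ($a_{n,2}\equiv a_{n',2}$ for $n=2^a3^bn'$ with $\gcd(n',6)=1$) at the cost of the fiber analysis you flag. That analysis does close: for $n>0$ with $3\mid n$, every member of $T_n$ has both entries positive and prime to $3$, hence is divisible by exactly one of $1\pm\sqrt{-2}$, so descent is well defined on sign-classes and each class of $T_n$ lies over exactly one class of norm $n/3$; writing the two ascents of $(k_0',k_1')$ as $(|k_0'-2k_1'|,|k_0'+k_1'|)$ and $(|k_0'+2k_1'|,|k_0'-k_1'|)$, membership in $T_n$ is governed by the single conditions $k_0'+k_1'\not\equiv 0$ and $k_0'-k_1'\not\equiv 0\pmod 3$, respectively, and the two ascents coincide as sign-classes precisely when $k_0'k_1'=0$ (which is exactly why your boundary cases contribute $1$ rather than $2$); running through the residues of $(k_0',k_1')$ modulo $3$ then yields exactly the fiber sizes $1$, $2$, $0$ you list, hence $|T_n|=|T_{n/3}|+2M$. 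The one point you should make explicit in a full write-up is that the target condition $n=mk^2$ with $m\mid 6$ is stable under both descents, i.e.\ $2N$ and $3N$ have this form if and only if $N$ does; this is routine but is what makes the induction deliver the stated theorem.
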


\begin{proof}

Let
$$L(n) = \big\{(k_0,k_1) \, \mid \,  n = k_0^2 + 2 k_1^2, \, k_m \geq 0, \, 3 \nmid k_m \text{ or } k_m =0 \big\} $$
so that $a_{n 2} = | L(n) |$.
Recall that $0,1$ are the only quadratic residues $\, \operatorname{mod} 3$.

\noindent\textbf{Case 1:} $n\equiv 1\, \operatorname{mod} 3$

For $(a,b)\in L(n)$, $n=a^2+2b^2\equiv 1\, \operatorname{mod} 3$ is only possible for $b=0$. Thus,
\begin{align*}
	|L(n)|=\begin{cases} 1, & \text{if } n=k^2\text{ for some } k\in\mathbb{Z}_{\geq0},\\
		0, & \text{otherwise},
	\end{cases}
\end{align*}
confirming the theorem for $n\equiv1\, \operatorname{mod} 3$.

\noindent\textbf{Case 2:} $n\equiv 2\, \operatorname{mod} 3$

For $(a,b)\in L(n)$, $n=a^2+2b^2\equiv 2\, \operatorname{mod} 3$ is only possible for $a=0$. Thus,
\begin{align*}
	| L(n) | = 	\begin{cases} 1, & \text{if } n=2k^2\text{ for some } k\in\mathbb{Z}_{\geq0},\\
		0, & \text{otherwise},
	\end{cases}
\end{align*}
confirming the theorem for $n\equiv2\, \operatorname{mod} 3$.

\noindent\textbf{Case 3:} $n\equiv 0\, \operatorname{mod} 3$

The case $n\equiv0\, \operatorname{mod} 3$ is more involved.
For $n=0$, we have $L(0) = \{ (0,0)\}$ and $a_{02} = |L(0)| =1$, confirming the theorem. Thus we may assume $n>0$.
Let
\begin{align*}
	S(n)=\{(k_0,k_1) \, \mid \, n= k_0^2+2k_1^2,\, k_m \in\mathbb{Z} \}.
\end{align*}
We define an equivalence relation on $S(n)$ by
\begin{align*}
	(a,b)\sim (c,d) \text{ iff } |a|=|c| \text{ and } |b|=|d|,\text{ for } (a,b),(c,d)\in S(n).
\end{align*}
Next, we investigate the equivalence classes of $S(n)/\sim$.

For $(a,b)\in S(n),\ n=a^2+2b^2\equiv 0\, \operatorname{mod} 3$ is only possible for $a,b\equiv 0\, \operatorname{mod} 3$ or $a,b\not\equiv 0 \, \operatorname{mod} 3$. In particular, in this case,
\begin{align*}
	L(n)= \{(k_0,k_1) \, \mid \,  n = k_0^2 + 2 k_1^2,   \, k_m > 0, \, 3 \nmid k_m \},
\end{align*}
and $L(n)$ contains exactly one representative for each equivalence class $[(a,b)]\in S(n)/\sim$ with $a,b\not\equiv0\, \operatorname{mod} 3$.

Noting
\begin{align*}
	a^2+2b^2=n\quad \iff\quad \left(\dfrac{a+4b}{3}\right)^2+2\left(\dfrac{2a-b}{3}\right)^2=n,
\end{align*}
we define $\Psi:\mathbb{R}^2\to\mathbb{R}^2$ by
\begin{align*}
	\Psi(a,b)=\left(\dfrac{a+4b}{3},\dfrac{2a-b}{3}\right).
\end{align*}
Observe that $\Psi$ is an involution, i.e., $\Psi^2=\text{id}$.

We define a graph whose vertices are the equivalence classes of $S(n)/\sim$ by connecting $[(a,b)]$ with $[(c,d)]$ if $\Psi(a',b')\in[(c,d)]$ for some $(a',b')\in[(a,b)]$. Since $\Psi$ is an involution, $[(a,b)]$ is connected with $[(c,d)]$ if and only if $[(c,d)]$ is connected with $[(a,b)]$, and the resulting graph is undirected. We will further investigate this graph. We begin by discussing the most common occurrences, treating exceptions last.

The equivalence classes $[(a,b)]\in S(n)/\sim$ with $a,b\not\equiv0\, \operatorname{mod}3$ are in one-to-one correspondence with $L(n)$. With exceptions to follow below, each such equivalence class is usually connected with exactly one other equivalence class. In particular, for $a\not\equiv b\, \operatorname{mod}3$, $[(a,b)]$ is connected to $[\Psi(a,b)]$ and for $a\equiv b\, \operatorname{mod}3$, $[(a,b)]$ is connected to $[\Psi(a,-b)]$.
Similarly, with exceptions to follow, the equivalence classes $[(a,b)]\in S(n)/\sim$ with $a,b\equiv0\, \operatorname{mod}3$ are usually connected with exactly two other equivalence classes, $[\Psi(a,b)]$ and $[\Psi(a,-b)]$.

Thus, the resulting graph will usually separate the equivalence classes corresponding to $L(n)$ into connected pairs, hence $|L(n)|\equiv 0\, \operatorname{mod}2$.

Exceptions to the above happen if either of the following two cases occurs:
\begin{itemize}
\item[$(1)$] One equivalence class is connected to itself, that is, $[\Psi(a,b)]=[(a,b)]$. This case occurs if and only if
\begin{align*}
	\left|\frac{2a-b}{3}\right|=|b| \iff a=-b\text{ or }a=2b \iff (k,k) \text{ or } (2k,k)\in S(n),
\end{align*}
that is, if and only $n=3k^2$ or $6k^2$.

\item[$(2)$] One equivalence class connects to the same equivalence class twice, that is, $[\Psi(a,b)]=[\Psi(a,-b)]$. This case occurs if and only if
\begin{align*}
	\left|\frac{2a-b}{3}\right|=\left|\frac{2a+b}{3}\right| \iff a=0\text{ or }b=0 \iff (0,k) \text{ or } (k,0)\in S(n),
\end{align*}
that is, if and only $n=2k^2$ or $k^2$ with $k\equiv 0\, \operatorname{mod} 3$.
\end{itemize}
This completes the proof of the Theorem.
\end{proof}

\begin{remark} \label{rem:4.2}\mbox{}
The solution set of the Diophantine equation $a^2+2b^2=n$ can be further investigated using unique factorization in the principal ideal domain $\mathbb{Z}\big[\sqrt{-2}\big]$.
\end{remark}

%%%%%%%%%%%%%%%%%%%%%%%%%%%%%%%%%%%%%%
\section{A Recurrence Relation for $\PD_k(n) \, \operatorname{mod} 2$}\label{fast recurrence}
%%%%%%%%%%%%%%%%%%%%%%%%%%%%%%%%%%%%%%

While Theorem \ref{thm: pd 2 to the ell} provides an explicit description of $\PD_{2^\ell}(n)$ mod $2$, for general $\PD_{k}(n)$ mod $2$ we have the following computationally fast recurrence.

\begin{theorem}\label{thm: fast rec.}
	For $n\geq 0$ and $k\ge 2$,
	\begin{align*}
        \PD_k(n) + \sum_{\ell\ge 1,\, 3\nmid \ell} \PD_k(n-\ell^2) \equiv
        \begin{cases} 1 \, \operatorname{mod} 2, & \text{if }
        	n=0 \text{ or } n=k m^2, \,\, m\geq 1, 3 \nmid m,\\
0 \, \operatorname{mod} 2, & \text{otherwise}.
\end{cases}
	\end{align*}
\end{theorem}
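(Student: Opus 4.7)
The plan is to observe that the recurrence is nothing but the coefficient-wise expansion of a single simple identity modulo $2$, namely $\pd_k(q)\cdot \pd_2(q)\equiv \pd_2(q^k) \pmod 2$, combined with Lemma \ref{pd2 mod 2}.

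First I would recall from Lemma \ref{pd2 mod 2} that
$$ \pd_2(q) \equiv g(q) \, \operatorname{mod} 2, $$
and from the definition of $g$ in Equation \eqref{pdk},
$$ \pd_k(q) = \frac{g(q^k)}{g(q)}. $$
Multiplying these, one obtains the key identity
$$ \pd_k(q)\, \pd_2(q) \equiv \pd_k(q)\, g(q) = g(q^k) \equiv \pd_2(q^k) \, \operatorname{mod} 2, $$
where the last congruence again uses Lemma \ref{pd2 mod 2} (applied with $q$ replaced by $q^k$).

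Next I would substitute the explicit mod-$2$ formula \eqref{pd2 mod 2A},
$$ \pd_2(q) \equiv 1 + \sum_{\ell\ge 1,\, 3\nmid \ell} q^{\ell^2} \, \operatorname{mod} 2, \qquad \pd_2(q^k) \equiv 1 + \sum_{m\ge 1,\, 3\nmid m} q^{km^2} \, \operatorname{mod} 2, $$
into the identity above to get
$$ \pd_k(q)\Bigl(1 + \sum_{\ell\ge 1,\, 3\nmid \ell} q^{\ell^2}\Bigr) \equiv 1 + \sum_{m\ge 1,\, 3\nmid m} q^{km^2} \, \operatorname{mod} 2. $$
Reading off the coefficient of $q^n$ on both sides yields exactly the claimed recurrence.

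There is no real obstacle here; the whole argument is an application of Lemma \ref{pd2 mod 2} twice (once for $\pd_2(q)$ and once for $\pd_2(q^k)$), with the factor $g(q)$ cancelling in between. The only minor bookkeeping step is to verify that the sum on the left terminates at $\ell^2 \le n$ (which is automatic since $\PD_k$ vanishes on negative integers) and that the right-hand side picks up the value $1$ exactly when $n=0$ or $n=km^2$ with $3\nmid m$.
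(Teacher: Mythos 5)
Your proposal is correct and follows essentially the same route as the paper: both derive $\pd_k(q)\,\pd_2(q)\equiv \pd_2(q^k) \bmod 2$ from Equation \eqref{pdk} and Lemma \ref{pd2 mod 2}, substitute the explicit series \eqref{pd2 mod 2A}, and compare coefficients of $q^n$. No changes needed.
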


\begin{proof}
Combining Equation \eqref{pdk} with Lemma \ref{pd2 mod 2} gives
\begin{align*}
\pd_k(q) = \frac{g(q^k)}{g(q)} \equiv \frac{\pd_2(q^k)}{\pd_2(q)} \, \operatorname{mod} 2.
\end{align*}
Substituting Equation \eqref{pd2 mod 2A}, we have
\begin{equation}
	\left( \sum_{n \geq 0} \PD_k(n) q^n \right) \left( 1 + \sum_{\ell \geq 1, \, 3 \nmid \ell} q^{\ell^2}\right) \equiv  1 + \sum_{m \geq 1, \, 3 \nmid m} q^{km^2}\, \operatorname{mod} 2,
\end{equation}
and the result follows.
\end{proof}

%%%%%%%%%%%%%%%%%%%%%%%%%%%%%%%%%%%%%%
\section{The Case of $\pd_{2}(q) \, \operatorname{mod} 4$}\label{sec: mod 4}
%%%%%%%%%%%%%%%%%%%%%%%%%%%%%%%%%%%%%%

Next we turn to $\pd_{2}(q) \, \operatorname{mod} 4$ beginning with a helpful result.

\begin{lemma}
Working modulo $4$,
	\begin{equation} \label{f3 3 mod 4}
		f_1^3 \equiv \sum_{n \geq0} q^{\frac{n(n+1)}{2}} \, \operatorname{mod} 4
	\end{equation}
and
	\begin{equation} \label{f8 3 mod 4}
		qf_8^3 \equiv \sum_{n \geq 1,\, 2\nmid n} q^{n^2} \, \operatorname{mod} 4.
	\end{equation}
\end{lemma}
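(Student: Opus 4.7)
The plan is to derive both congruences as immediate consequences of Jacobi's classical triangular number identity
$$ f_1^3 = \sum_{n \geq 0} (-1)^n (2n+1) q^{n(n+1)/2}, $$
together with the simple observation that $(-1)^n(2n+1) \equiv 1 \, \operatorname{mod} 4$ for every $n \geq 0$. This observation is checked by splitting into the cases $n = 2k$, giving $(-1)^n(2n+1) = 4k+1$, and $n = 2k+1$, giving $(-1)^n(2n+1) = -(4k+3) \equiv 1 \, \operatorname{mod} 4$.

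For the first congruence I would simply substitute the parity observation into Jacobi's identity. All odd coefficients collapse to $1$ modulo $4$, which yields \eqref{f3 3 mod 4} directly.

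For the second congruence, the idea is to apply Jacobi's identity with $q$ replaced by $q^8$ and then multiply by $q$. This gives
$$ q f_8^3 = \sum_{n \geq 0} (-1)^n (2n+1) q^{4n(n+1) + 1}. $$
The key algebraic step is recognizing the exponent identity $(2n+1)^2 = 4n(n+1) + 1$, which rewrites the series as
$$ q f_8^3 = \sum_{n \geq 0} (-1)^n (2n+1) q^{(2n+1)^2}. $$
Applying the same coefficient reduction $(-1)^n(2n+1) \equiv 1 \, \operatorname{mod} 4$ and reindexing by $m = 2n+1$ gives \eqref{f8 3 mod 4}.

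There is essentially no obstacle here: the only nonobvious ingredient is Jacobi's triangular number identity, and both congruences reduce to arithmetic in $\mathbb{Z}/4\mathbb{Z}$ on the coefficients. The main care needed is in bookkeeping exponents, particularly for \eqref{f8 3 mod 4}, where one must correctly identify $4n(n+1)+1$ as an odd square to align the right-hand side with the stated sum over odd positive integers.
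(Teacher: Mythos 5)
Your proof is correct, but it takes a genuinely different route from the paper. You work on the series side: you invoke Jacobi's identity $f_1^3 = \sum_{n\geq 0}(-1)^n(2n+1)q^{n(n+1)/2}$ and observe that every coefficient $(-1)^n(2n+1)$ is $\equiv 1 \operatorname{mod} 4$ (your parity check is right), after which both congruences, including the exponent bookkeeping $4n(n+1)+1=(2n+1)^2$ for the $q\mapsto q^8$ substitution, fall out immediately. The paper instead works on the product side: it first establishes the auxiliary congruence $f_{2k}^{2m}\equiv f_k^{4m}\operatorname{mod}4$ (quoted from da Silva--Sellers), specializes to $f_2^2\equiv f_1^4$, and combines this with the theta identity $\psi(q)=\sum_{n\geq 0}q^{n(n+1)/2}=f_2^2/f_1$ to get $f_1^3\equiv \psi(q)$; the second congruence is then the same substitution you perform. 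The two arguments are essentially dual ways of expressing the same fact, but each buys something: yours is more self-contained, needing only one classical identity and an elementary coefficient computation, while the paper's choice of the product-side congruence $f_{2k}^{2m}\equiv f_k^{4m}$ is deliberate, since that relation is reused repeatedly in the subsequent mod-$4$ dissection arguments (e.g., in the proofs of the following theorem and of Lemma \ref{lemma-odd/even}), so establishing it once here amortizes the cost.
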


\begin{proof}
	For any positive integers $k$ and $m$, we have the identity,  \cite[Lemma 5]{dS20b},
	\begin{align}\label{mod 4 relation}
		f_{2k}^{2m} \equiv f_k^{4m} \, \operatorname{mod} 4.
	\end{align}
Therefore, the theta identity \cite[Equation (2.2.13)]{partitions} provides
\begin{align*}
\sum_{n\ge 0} q^{\frac{n(n+1)}{2}} =\dfrac{(q^2;q^2)_\infty}{(q;q^2)_\infty}= \frac{f_2^2}{f_1} \equiv f_1^3 \, \operatorname{mod} 4,
\end{align*}
and the second identity is an immediate consequence of
\[
qf_8^3 = q[f_1(q^8)]^3\equiv q \sum_{n\ge 0} q^{4n(n+1)} =\sum_{n\ge 0} q^{(2n+1)^2} \, \operatorname{mod} 4. \qedhere
\]
\end{proof}

This result allows one to prove the following characterization:

\begin{theorem}
For all $n\geq0$,
	$$
	\PD_2(n) \equiv c_n \, \operatorname{mod} 4$$
	where
	\begin{align*}
		c_n & = \Big|\{ \text{solutions to } n = \frac{3k_0(k_0+1)}{2} + \sum_{m\geq 1} k_m(12a_m+b_m)  \, | \, \\
		&\quad\qquad\; k_m,a_m \geq 0, \, b_m\in\{0,1,2,3\}, \; 1\leq k_1 < k_2 < k_3 < \ldots    \}\Big|.
	\end{align*}
\end{theorem}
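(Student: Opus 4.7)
The plan is to write $\pd_2(q)=\frac{f_6^2\,f_4}{f_1\,f_3\,f_{12}}$ as a product of two factors modulo $4$, one of which is the generating function for the $\tfrac{3k_0(k_0+1)}{2}$ summand in the definition of $c_n$ and the other for the sum $\sum_{m\ge 1}k_m(12a_m+b_m)$. The first step is to apply the identity $f_{2k}^{2m}\equiv f_k^{4m}\,\operatorname{mod} 4$ from Equation~\eqref{mod 4 relation} with $k=3$, $m=1$ to obtain $f_6^2\equiv f_3^4\,\operatorname{mod} 4$. Since $f_3$ has constant term $1$ and is therefore invertible in $\mathbb{Z}[[q]]$ (and in $(\mathbb{Z}/4\mathbb{Z})[[q]]$), this yields
$$\pd_2(q)\equiv \frac{f_3^3\,f_4}{f_1\,f_{12}}\,\operatorname{mod} 4.$$

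Next, replacing $q$ by $q^3$ in Equation~\eqref{f3 3 mod 4} gives $f_3^3\equiv\sum_{k_0\ge 0}q^{3k_0(k_0+1)/2}\,\operatorname{mod} 4$, which is precisely the generating function for the triangular summand in $c_n$ (scaled by $3$). The remaining rational factor expands as an Euler product,
$$\frac{f_4}{f_1\,f_{12}}=\prod_{k\ge 1}\frac{1-q^{4k}}{(1-q^k)(1-q^{12k})}=\prod_{k\ge 1}\frac{1+q^k+q^{2k}+q^{3k}}{1-q^{12k}}=\prod_{k\ge 1}\sum_{\substack{a\ge 0\\ b\in\{0,1,2,3\}}} q^{k(12a+b)},$$
and a monomial in this product expansion is specified by the finite strictly increasing sequence $1\le k_1<k_2<\cdots$ of those $k$ with $(a_k,b_k)\ne(0,0)$, together with the corresponding nonzero pairs. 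The coefficient of $q^n$ therefore enumerates representations $n=\sum_m k_m(12a_m+b_m)$ of exactly the form appearing in the definition of $c_n$.

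Multiplying the two expansions and reading off the coefficient of $q^n$ produces $\PD_2(n)\equiv c_n\,\operatorname{mod} 4$, as claimed. The one nontrivial technical point is the mod-$4$ identity $f_6^2\equiv f_3^4\,\operatorname{mod} 4$, which is what allows the $f_3$ in the denominator of $\pd_2(q)$ to be absorbed; the remaining manipulations reduce to standard Euler-product bookkeeping and the previously established theta-series interpretation of $f_3^3$. One should also note that the statement of the theorem implicitly disallows trivial pairs $(a_m,b_m)=(0,0)$, equivalently requires that each representation use only finitely many $k_m$, since otherwise any given $n$ would admit infinitely many representations; this matches the Euler-product expansion above.
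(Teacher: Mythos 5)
Your proposal is correct and follows essentially the same route as the paper: reduce $f_6^2\equiv f_3^4 \,\operatorname{mod} 4$ via Equation \eqref{mod 4 relation} to get $\pd_2(q)\equiv f_3^3 f_4/(f_1 f_{12})$, interpret $f_3^3$ through Equation \eqref{f3 3 mod 4}, and expand $f_4/(f_1 f_{12})$ as the Euler product $\prod_{k\ge 1}\sum_{a\ge 0,\, b\in\{0,1,2,3\}} q^{k(12a+b)}$, which is exactly the paper's identity $\frac{1-x^4}{(1-x)(1-x^{12})}=\sum_{n=12a+b}x^n$ applied with $x=q^k$. Your closing remark about excluding the trivial pairs $(a_m,b_m)=(0,0)$ is a sensible clarification of the statement, not a gap.
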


\begin{proof}
With Equation \eqref{mod 4 relation}, we have the generating function
	\begin{align*}
		\pd_2(q)=\dfrac{f_{4} f_{6}^2}{f_1 f_3 f_{12}}\equiv \dfrac{f_3^3 f_{4}}{f_1 f_{12}}\, \operatorname{mod}4.
	\end{align*}
The result follows now easily from
\[f_3^3 =[f_1(q^3)]^3\equiv \sum_{k \geq0} q^{\frac{3k(k+1)}{2}} \, \operatorname{mod} 4\]
and
\[ 	\frac{1-x^4}{(1-x)(1-x^{12})} = \frac{1}{1+x^4+x^8} \sum_{n\ge 0} x^n =
		\sum_{\substack{n=12a+b,\\ a \geq 0,\\ b\in\{0,1,2,3\}}} x^n.\qedhere \]
\end{proof}

We further prove an explicit characterization of $\PD_2(2n+1)$ modulo $4$ related to generalized pentagonal numbers by means of the following dissection of $\pd_2(q)$ into even and odd powers.

\begin{lemma}\label{lemma-odd/even}
	Working modulo $4$,
\begin{align}\label{odd/even}
	\pd_2(q)\equiv \left[\dfrac{f_2^3}{f_6}\right]^2+qf_{12}^2 \, \operatorname{mod}4,
\end{align}
splitting $\pd_2(q)$ into even and odd powers of $q$,   respectively.
\end{lemma}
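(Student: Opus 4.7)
My plan is to establish an exact $2$-dissection of $\pd_2(q)$ in $\mathbb{Z}[[q]]$ and then reduce each summand modulo $4$ using the standard mod-$4$ congruence $f_{2k}^{2m} \equiv f_k^{4m} \, \operatorname{mod} 4$ from Equation~\eqref{mod 4 relation}. The two halves of the dissection will then land on $[f_2^3/f_6]^2$ (even powers of $q$) and $q f_{12}^2$ (odd powers of $q$), respectively.

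First, I would simplify $\pd_2(q) \equiv \frac{f_3^3 f_4}{f_1 f_{12}} \, \operatorname{mod} 4$ via $f_6^2 \equiv f_3^4 \, \operatorname{mod} 4$, mirroring the reduction used in the preceding theorem. Next, I would invoke the well-known $2$-dissection
$$ \frac{1}{f_1 f_3} = \frac{f_8^2 f_{12}^5}{f_2^2 f_4 f_6^4 f_{24}^2} + q\,\frac{f_4^5 f_{24}^2}{f_2^4 f_6^2 f_8^2 f_{12}} $$
from the $q$-series literature (cf.\ Hirschhorn's \emph{The Power of q}) and multiply through by $\frac{f_4 f_6^2}{f_{12}}$ to obtain the exact $2$-dissection of $\pd_2(q)$:
$$ \pd_2(q) = \frac{f_8^2 f_{12}^4}{f_2^2 f_6^2 f_{24}^2} + q\,\frac{f_4^6 f_{24}^2}{f_2^4 f_8^2 f_{12}^2}, $$
with the first summand lying in $\mathbb{Z}[[q^2]]$ and the second in $q\,\mathbb{Z}[[q^2]]$.

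Finally, I would simplify each summand modulo $4$. For the first (even) summand, applying $f_8^2 \equiv f_4^4$, $f_{24}^2 \equiv f_{12}^4$, and then $f_4^4 \equiv f_2^8$ (all mod $4$) collapses the expression to $f_2^6/f_6^2 = [f_2^3/f_6]^2$. For the second (odd) summand, applying $f_4^6 \equiv f_2^{12}$, $f_{24}^2 \equiv f_{12}^4$, and $f_8^2 \equiv f_2^8$ (all mod $4$) collapses the expression to $f_{12}^2$. The main obstacle is obtaining the $2$-dissection of $1/(f_1 f_3)$ in this precise form: if it cannot be cited directly, it must be derived via standard theta-function identities (Jacobi's triple product together with parity separation of generalized pentagonal numbers), a routine but computationally involved task; once it is in hand, everything else is a mechanical application of $f_{2k}^{2m}\equiv f_k^{4m}\,\operatorname{mod}4$.
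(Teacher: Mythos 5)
Your proposal is correct, and the computation checks out: multiplying the $2$-dissection of $1/(f_1f_3)$ by $f_4f_6^2/f_{12}$ gives the exact dissection $\pd_2(q)=\frac{f_8^2 f_{12}^4}{f_2^2 f_6^2 f_{24}^2}+q\,\frac{f_4^6 f_{24}^2}{f_2^4 f_8^2 f_{12}^2}$, and repeated use of $f_{2k}^{2m}\equiv f_k^{4m}\,\operatorname{mod}4$ (which is legitimate in denominators as well, since these are units in $\mathbb{Z}[[q]]$) collapses the two summands to $[f_2^3/f_6]^2$ and $qf_{12}^2$. The route is a genuine variant of the paper's: the paper first reduces $\pd_2(q)\equiv f_3^3f_4/(f_1f_{12})\,\operatorname{mod}4$ and then invokes the $2$-dissection of $f_3^3/f_1$ (cf.\ \cite[Equation (3.75)]{XY13}), whereas you keep the exact generating function and invoke the $2$-dissection of $1/(f_1f_3)$ (cf.\ \cite[Equation (3.12)]{XY13}, the very identity this paper uses in Section~\ref{sec: dissec}). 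Your order of operations buys a cleaner justification of the ``even/odd split'' claim, since the dissection is exact in $\mathbb{Z}[[q]]$ before any reduction; the paper's order is marginally shorter because reducing first leaves fewer factors to dissect. One small redundancy: your opening reduction to $f_3^3f_4/(f_1f_{12})\,\operatorname{mod}4$ is never used afterward and can be dropped.
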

\begin{proof}
With Equation \eqref{mod 4 relation}, we have
\begin{align*}
	\pd_2(q)=\dfrac{f_{4} f_{6}^2}{f_1 f_3 f_{12}}\equiv \dfrac{f_3^3 f_{4}}{f_1 f_{12}}\, \operatorname{mod}4.
\end{align*}
Using the identity (cf., \cite[Equation (3.75)]{XY13})
\begin{align*}
	\dfrac{f_3^3}{f_1}=\dfrac{f_4^3 f_6^2}{f_2^2 f_{12}}+q\dfrac{f_{12}^3}{f_4},
\end{align*}
and repeatedly using \eqref{mod 4 relation} yields
\begin{align*}
	\pd_2(q)\equiv \dfrac{f_3^3 f_{4}}{f_1 f_{12}} \equiv \dfrac{f_4^4 f_6^2}{f_2^2 f^2_{12}}+q f_{12}^2 \equiv \left[\dfrac{f_2^3}{f_6}\right]^2+qf_{12}^2 \, \operatorname{mod}4.
\end{align*}
Noting that $f_2^3/f_6$ and $f_{12}$ are power series in $q^2$, the first term on the right-hand side gives all even power contributions while the second term gives all odd power contributions due to the multiplication by the extra factor of $q$.
\end{proof}

An application of Lemma \ref{lemma-odd/even} gives the following explicit characterization of $\PD_2(2n+1)$ mod $4$.

\begin{theorem}\label{Theorem odd}
For all $n\geq0$,
\begin{align*}
\PD_2(2n+1)\equiv d_n \, \operatorname{mod} 4,
\end{align*}
	where
	\begin{align*}
		d_n = &\bigg|\{ \text{solutions to } n = 3j(3j- 1) + 3k(3k- 1) = 2\big[\binom{3j}2 +\binom{3k}2\big]  \, \Big| \, \  k,j\in\mathbb{Z}   \}\bigg|.
	\end{align*}
\end{theorem}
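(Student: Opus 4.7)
The plan is to build on Lemma \ref{lemma-odd/even}, which already splits $\pd_2(q)$ into its even and odd power parts modulo $4$. Extracting the odd part gives
\[ \sum_{n \geq 0} \PD_2(2n+1)\, q^{2n+1} \equiv q f_{12}^2 \pmod 4, \]
and dividing by $q$ yields $\sum_{n \geq 0} \PD_2(2n+1)\, q^{2n} \equiv f_{12}^2 \pmod 4$. Since $f_{12}^2 = [f_1(q^{12})]^2$ is genuinely a power series in $q^2$, substituting $q^2 \mapsto q$ on both sides reduces the theorem to verifying
\[ \sum_{n \geq 0} \PD_2(2n+1)\, q^n \equiv f_6^2 \pmod 4. \]

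Next I would invoke Euler's pentagonal number theorem applied to $f_6 = f_1(q^6)$ to write
\[ f_6 = \sum_{k \in \mathbb{Z}} (-1)^k q^{3k(3k-1)}, \qquad f_6^2 = \sum_{j,k \in \mathbb{Z}} (-1)^{j+k}\, q^{3j(3j-1)+3k(3k-1)}. \]
On the other hand, the target series is unsigned: $\sum_{n \geq 0} d_n q^n = \sum_{j,k \in \mathbb{Z}} q^{3j(3j-1)+3k(3k-1)}$. So the remaining task is to show the signed and unsigned sums agree modulo $4$, equivalently that for every $n$ the number of lattice points $(j,k) \in \mathbb{Z}^2$ with $j+k$ odd and $3j(3j-1)+3k(3k-1) = n$ is even.

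This parity statement is the only point that requires any actual argument, and it is the step I would flag as the heart of the proof. I would handle it by exhibiting a fixed-point-free involution on the set of such pairs: the swap $(j,k) \mapsto (k,j)$ preserves both $3j(3j-1)+3k(3k-1)$ and the parity of $j+k$, and any fixed point would force $j = k$ and hence $j+k = 2j$ even, contradicting the odd-parity condition. Hence the set of contributing pairs splits into disjoint $2$-cycles, its cardinality is even, and the signed and unsigned sums are congruent mod $4$. Finally, rewriting $3j(3j-1) = 2\binom{3j}{2}$ matches the stated form of $d_n$ and completes the proof.
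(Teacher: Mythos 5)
Your proposal is correct and follows essentially the same route as the paper: extract the odd part $qf_{12}^2$ from Lemma \ref{lemma-odd/even}, rewrite it as $f_6^2$ after the substitution $q^2\mapsto q$, expand via Euler's pentagonal number theorem, and observe that the negatively signed terms (those with $j+k$ odd) pair off under $(j,k)\mapsto(k,j)$, so the signed and unsigned counts agree modulo $4$. The paper phrases the pairing directly rather than as a fixed-point-free involution, but the argument is identical.
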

\begin{proof}
One has from \eqref{odd/even},
\begin{align*}
\sum_{n \geq0} \PD_2(2n+1)q^n\equiv f_6^2= [f_1(q^6)]^2=\sum_{j,k\in\mathbb{Z}} (-1)^{j+k}q^{3[j(3j-1)+k(3k-1)]}  \, \operatorname{mod} 4,
\end{align*}
where we have used Euler's pentagonal number theorem \cite[Corollary 1.7]{partitions}. The result follows from noting that on the right-hand side the only contributions with coefficient $-1$ occur with different parities of $j$ and $k$, hence will come in pairs of solutions $(j,k)$, $(k,j)$ each time and thus contribute $2(-1)^{j+k}\equiv 2 \operatorname{mod} 4$ to the count.
\end{proof}

Similarly, Lemma \ref{lemma-odd/even} also provides an explicit characterization of $\PD_2(2n)$ mod~$4$.

\begin{theorem}\label{pd2(2n) mod4}
\begin{align*}
\sum_{n\geq0}\PD_2(2n)q^{n} & \equiv  \left[\dfrac{f_1^3}{f_3}\right]^2 \equiv \Big(1+\sum_{k \geq 1, \, 3 \nmid k}q^{k^2}\Big)^2\\
&  \equiv 1+2\sum_{k \geq 1, \, 3 \nmid k}q^{k^2}+\sum_{k,\ell \geq 1, \, 3 \nmid k,\ell}q^{k^2+\ell^2} \, \operatorname{mod} 4.
\end{align*}
\end{theorem}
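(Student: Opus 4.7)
The plan is to build the theorem directly from Lemma \ref{lemma-odd/even} and the mod $2$ characterization of $\pd_2(q)$ already established in Lemma \ref{pd2 mod 2}, by promoting a mod $2$ identity to a mod $4$ identity through squaring.

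First, I would extract the even-power part of $\pd_2(q)$. By Lemma \ref{lemma-odd/even},
$$ \pd_2(q)\equiv \left[\dfrac{f_2^3}{f_6}\right]^2+qf_{12}^2 \, \operatorname{mod} 4, $$
and since both $f_2^3/f_6$ and $f_{12}$ are power series in $q^2$, the first term carries exactly the even-power contributions. Hence
$$ \sum_{n\geq 0}\PD_2(2n)q^{2n} \equiv \left[\dfrac{f_2^3}{f_6}\right]^2 \, \operatorname{mod} 4, $$
and replacing $q^2$ by $q$ (equivalently, using $f_k(q^2)=f_{2k}(q)$) yields the first claimed congruence
$$ \sum_{n\geq 0}\PD_2(2n)q^{n} \equiv \left[\dfrac{f_1^3}{f_3}\right]^2 \, \operatorname{mod} 4. $$

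Next, I would identify $\frac{f_1^3}{f_3}$ with $\pd_2(q)$ modulo $2$. Using $f_{2k}\equiv f_k^2 \, \operatorname{mod} 2$ in the generating function of Section~2, one computes
$$ \pd_2(q) = \dfrac{f_4 f_6^2}{f_1 f_3 f_{12}} \equiv \dfrac{f_2^2\, f_3^4}{f_1 f_3\, f_6^2} \equiv \dfrac{f_1^3}{f_3} \, \operatorname{mod} 2, $$
which combined with Equation \eqref{pd2 mod 2A} gives
$$ \dfrac{f_1^3}{f_3} \equiv 1 + \sum_{k\geq 1,\, 3\nmid k} q^{k^2} \, \operatorname{mod} 2. $$

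The pivotal step is then the mod $2$ to mod $4$ lift: whenever $A\equiv B \, \operatorname{mod} 2$, writing $A=B+2C$ gives $A^2=B^2+4BC+4C^2\equiv B^2 \, \operatorname{mod} 4$. Applying this to $A=f_1^3/f_3$ and $B=1+\sum_{k\geq 1,\,3\nmid k}q^{k^2}$ yields the second congruence
$$ \left[\dfrac{f_1^3}{f_3}\right]^2 \equiv \Big(1+\sum_{k \geq 1, \, 3 \nmid k}q^{k^2}\Big)^2 \, \operatorname{mod} 4. $$
The third congruence is then just the binomial expansion of this square, splitting into the cross terms $2\sum_{k\geq 1,\,3\nmid k}q^{k^2}$ and the double sum $\sum_{k,\ell\geq 1,\,3\nmid k,\ell}q^{k^2+\ell^2}$.

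There is no real obstacle here since the heavy lifting (the dissection identity) is already done in Lemma \ref{lemma-odd/even}; the only subtle point is the squaring argument that upgrades congruences mod $2$ to congruences mod $4$, and the bookkeeping needed to justify the substitution $q^2\mapsto q$ in the extraction of even powers. Both are clean one-line observations, so the proof should be quite short.
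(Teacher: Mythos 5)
Your proposal is correct and follows essentially the same route as the paper: extract the even-power part via Lemma \ref{lemma-odd/even}, establish $f_1^3/f_3 \equiv 1+\sum_{k \geq 1,\, 3\nmid k}q^{k^2} \, \operatorname{mod} 2$, and square to lift the congruence to modulus $4$. The only (harmless) difference is that you derive that mod-$2$ identity internally from $\pd_2(q)\equiv f_1^3/f_3 \, \operatorname{mod} 2$ together with Equation \eqref{pd2 mod 2A}, whereas the paper instead quotes the identities of \cite{SS20}.
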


\begin{proof}
Note that by \cite[Equations (2) and (4)]{SS20}, we have
\begin{align*}
\dfrac{f_1^3}{f_3} \equiv 1+q\dfrac{f_9^3}{f_3}\equiv 1+ q\sum_{j\in\mathbb{Z}}q^{3j(3j-2)} = 1+ \sum_{j\in\mathbb{Z}}q^{(3j-1)^2}= 1+ \sum_{k \geq 1, \, 3 \nmid k}q^{k^2}\, \operatorname{mod} 2,
\end{align*}
which implies
\begin{align} \label{conjecture pd2(2n) mod4A}
\left[\dfrac{f_1^3}{f_3}\right]^2 \equiv \Big(1+ \sum_{k \geq 1, \, 3 \nmid k}q^{k^2}\Big)^2\, \operatorname{mod} 4.
\end{align}
Combining \eqref{odd/even} with Equation \eqref{conjecture pd2(2n) mod4A} gives
\begin{align*}
\sum_{n\geq0}\PD_2(2n)q^{n} & \equiv  \left[\dfrac{f_1^3}{f_3}\right]^2 \equiv
 \Big(1+\sum_{k \geq 1, \, 3 \nmid k}q^{k^2}\Big)^2 \, \operatorname{mod} 4. \qedhere
\end{align*}
\end{proof}

As an easy application of these results, we prove a few observed congruences. Alternative proofs using dissections can be found in \cite[Corollary 1.4, Theorem 1.5]{BO15}.

\begin{theorem}\label{thm: cong 2}
For $n\geq1,$ we have
\begin{align*}
\PD_{2}(3n)\equiv0  \, \operatorname{mod}  4.
\end{align*}
 For all $n\geq1$ with $6\nmid n$, we have
\begin{align*}
\PD_{2}(2n+1)&\equiv0  \, \operatorname{mod}  4.
\end{align*}
\end{theorem}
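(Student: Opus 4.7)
The plan is to read both congruences off directly from the previously established mod~$4$ descriptions of $\PD_2(2n+1)$ and $\PD_2(2n)$ furnished by Theorem~\ref{Theorem odd} and Theorem~\ref{pd2(2n) mod4}. The crucial arithmetic observation, used throughout, is that $3j(3j-1)$ is always a nonnegative multiple of $6$: for any integer $j$ exactly one of $j$ and $3j-1$ is even, so $j(3j-1)/2$ is the $j$-th generalized pentagonal number, and $3j(3j-1) = 6 \cdot j(3j-1)/2 \in 6\mathbb{Z}_{\ge 0}$.

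For the odd-index congruence $\PD_2(2n+1) \equiv 0 \, \operatorname{mod} 4$, I would apply Theorem~\ref{Theorem odd}. By the observation above, every term $3j(3j-1) + 3k(3k-1)$ lies in $6\mathbb{Z}_{\ge 0}$, so the count $d_n$ vanishes identically whenever $6 \nmid n$, yielding the congruence.

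For $\PD_2(3n) \equiv 0 \, \operatorname{mod} 4$ with $n \geq 1$, I would split on the parity of $n$. If $n = 2t+1$ is odd, then $3n = 2m+1$ with $m = 3t+1$, so $m \equiv 1 \, \operatorname{mod} 3$ and in particular $6 \nmid m$; the odd-index congruence just established then delivers the result. If $n = 2n'$ is even with $n' \geq 1$, I would extract the coefficient of $q^{3n'}$ from the right-hand side of Theorem~\ref{pd2(2n) mod4}. The constant term contributes only at $n'=0$; the middle sum $2 \sum_{k \geq 1,\, 3 \nmid k} q^{k^2}$ would require $3n' = k^2$ with $3 \nmid k$, impossible; and the double sum $\sum_{k, \ell \geq 1,\, 3 \nmid k,\ell} q^{k^2 + \ell^2}$ would require $3n' = k^2 + \ell^2$ with $3 \nmid k,\ell$, contradicting the fact that such a sum satisfies $k^2 + \ell^2 \equiv 2 \, \operatorname{mod} 3$.

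Neither step conceals any serious difficulty: once the two characterizing theorems are invoked, everything collapses to elementary residue considerations modulo~$3$ together with the simple $6$-divisibility of $3j(3j-1)$. The genuine work, such as it is, has already been done in establishing Theorem~\ref{Theorem odd} and Theorem~\ref{pd2(2n) mod4}; what remains here is essentially bookkeeping.
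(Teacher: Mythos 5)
Your proposal is correct, and the second congruence is handled exactly as in the paper: Theorem~\ref{Theorem odd} plus the observation that $3j(3j-1)+3k(3k-1)\equiv 0 \bmod 6$. The first congruence, however, is where you genuinely diverge. The paper does not touch Theorems~\ref{Theorem odd} or~\ref{pd2(2n) mod4} for this part at all; it instead imports the $3$-dissection of $\pd_2(q)$ from \cite[Theorem 22]{ALL02}, which gives $\sum_{n\geq 0}\PD_2(3n)q^n = f_2^2f_6^4/(f_1^4f_{12}^2)$, and then collapses this to $1 \bmod 4$ in one line via the identity $f_{2k}^{2m}\equiv f_k^{4m} \bmod 4$ of Equation~\eqref{mod 4 relation}. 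Your route splits $3n$ by parity: for $n$ odd you write $3n=2m+1$ with $m=3t+1$, note $6\nmid m$, and reuse the odd-index result (this is not circular, since $d_m=0$ follows directly from Theorem~\ref{Theorem odd}); for $n=2n'$ even you read off the coefficient of $q^{3n'}$ in Theorem~\ref{pd2(2n) mod4} and kill each term by residues mod $3$ (in particular $k^2+\ell^2\equiv 2\bmod 3$ when $3\nmid k,\ell$). Both arguments are sound. The paper's is shorter and self-contained in a single display, but leans on an external dissection identity; yours is slightly longer but has the virtue of deriving the congruence entirely from characterizations already proved within the paper, making the logical dependence on \cite{ALL02} unnecessary for this theorem.
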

\begin{proof}
The first congruence is easily seen by noting from \cite[Theorem 22]{ALL02} and Equation \eqref{mod 4 relation} that
\begin{align*}
\sum_{n \geq0} \PD_2(3n)q^n&=\dfrac{f_2^2 f_{6}^4}{f_1^4  f_{12}^2}\equiv\dfrac{f_1^4 f_{6}^4}{f_1^4  f_{6}^4}\equiv 1 \operatorname{mod} 4.
\end{align*}
The second congruence is an immediate consequence of Theorem \ref{Theorem odd} as
\[n = 3j(3j- 1) + 3k(3k- 1) = 2\big[\binom{3j}2 +\binom{3k}2\big]\]
implies that $n \equiv 0$ mod $6$.
\end{proof}

We end this section with yet another nice characterization of $\PD_2(n)$ mod $4$.

\begin{theorem}\label{pd2(n) mod4}
\begin{align*}
\pd_2(q) \equiv 1+ \Big( \sum_{k \geq 1, \, 3 \nmid k}q^{k^2}\Big) \Big(1+2 \sum_{k\ge 1}q^{k^2}\Big) = 1+\Big( \sum_{k \geq 1, \, 3 \nmid k}q^{k^2}\Big) \Big(\sum_{k\in \mathbb{Z}}q^{k^2}\Big) \, \operatorname{mod} 4.
\end{align*}
\end{theorem}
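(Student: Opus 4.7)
The plan is to first combine Lemma~\ref{lemma-odd/even} with the mod-$2$ identity for $f_1^3/f_3$ established in the proof of Theorem~\ref{pd2(2n) mod4}, yielding
\[
\pd_2(q) \equiv (1 + A(q^2))^2 + q f_{12}^2 \pmod{4},
\]
where $A(q) = \sum_{k \ge 1,\, 3 \nmid k} q^{k^2}$. The problem then reduces to showing this expression is congruent to $1 + A(q)\theta(q)$ modulo $4$, where $\theta(q) = \sum_{k \in \mathbb{Z}} q^{k^2} = 1 + 2\sum_{k \ge 1} q^{k^2}$.

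Since $1 + A(q)\theta(q) = 1 + A(q) + 2 A(q) T(q)$ with $T(q) = \sum_{k \ge 1} q^{k^2}$, I would verify the identity separately on even and odd powers of $q$. Writing $A = A_e + A_o$ and $T = T_e + T_o$ for the even/odd decompositions in $q$, the even part of $1 + A\theta$ is $1 + A_e + 2(A_e T_e + A_o T_o)$ and the odd part is $A_o + 2(A_e T_o + A_o T_e)$. Correspondingly, $(1 + A(q^2))^2$ contributes only to even powers while $q f_{12}^2$ contributes only to odd powers, so the two parts may be matched independently.

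For the even part, the key observation is $A(q)^2 \equiv A(q^2) \pmod{2}$, which follows by a diagonal-versus-off-diagonal argument in the square. This lets one expand $A(q^2)^2 = A(q^4) + 2C$ with $C$ collecting the off-diagonal contributions, so matching $(1 + A(q^2))^2 \equiv 1 + A_e + 2(A_e T_e + A_o T_o) \pmod 4$ reduces to a mod-$2$ identity that can be verified by sorting representations $2(j^2 + k^2)$ by the parities of $j$ and $k$. For the odd part, I would invoke the exact identity $f_{12}^2 = f_{24}\,\theta(q^{12})$ arising from the Jacobi triple product, together with Euler's pentagonal theorem giving $q f_{24} = \sum_{m \in \mathbb{Z}} (-1)^m q^{(6m-1)^2}$. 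Since the exponents $(6m-1)^2$ enumerate the squares of positive integers coprime to $6$, this yields $q f_{24} \equiv A_o \pmod{2}$, and the additional factor $2 q f_{24} \sum_{m \ge 1} q^{12 m^2}$ from $\theta(q^{12}) - 1$ should account for the cross terms $2(A_e T_o + A_o T_e)$ modulo $4$.

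The main obstacle will be the two mod-$2$ identities required to close the even and odd comparisons; I expect each to reduce to an explicit pairing of representations of an integer as a sum of two squares with appropriate parity and coprimality constraints modulo $6$, so that representations come in pairs except for those already accounted for on the right-hand side.
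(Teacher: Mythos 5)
Your strategy is genuinely different from the paper's, but as written it has a real gap: the entire content of the proof is deferred to two mod-$2$ theta identities (one closing the even part, one the odd part) that you only assert ``should'' follow from pairings, and in the odd part you have also dropped a term. Concretely, writing $A(q)=\sum_{k\ge1,\,3\nmid k}q^{k^2}$ with even/odd splits $A_e,A_o$ and $T_e,T_o$ as in your sketch: from $f_{12}^2=f_{24}\,\varphi(-q^{12})$ you need $qf_{24}$ modulo $4$, not merely modulo $2$, in the main term $qf_{24}\cdot 1$. Since $qf_{24}=\sum_{m\in\mathbb{Z}}(-1)^m q^{(6m-1)^2}$ and $-1\equiv 1+2 \pmod 4$, this is $A_o+2\sum_{m \text{ odd}}q^{(6m-1)^2}$, and the extra sign term must be absorbed together with $2\,qf_{24}\sum_{m\ge1}q^{12m^2}$ into the cross terms $2(A_eT_o+A_oT_e)$. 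A spot check at $q^{49}$ shows the three sources combine as $1+2+2\equiv 1\pmod 4$ against a right-hand side of $1$, so the cancellation is real but delicate, and your sketch does not account for it. Likewise the even-part comparison reduces to
\begin{align*}
A(q^2)+\sum_{\substack{1\le j<k,\\ 3\nmid j,\ 3\nmid k}}q^{2(j^2+k^2)} \equiv A_eT_e+A_oT_o \pmod 2,
\end{align*}
which rests on the bijection $2(j^2+k^2)=(j+k)^2+(j-k)^2$ with the divisibility-by-$3$, positivity, and ordering constraints tracked on both sides; this is plausible but it is the actual proof and it is missing.

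For comparison, the paper's argument is a short direct computation: it starts from the exact identity $\pd_2(q)=\big(\sum_{j}q^{(3j)^2}-\sum_{j}q^{(3j+1)^2}\big)\big/\big(1+2\sum_{k\ge1}(-1)^kq^{k^2}\big)$ of Andrews--Lewis--Lovejoy, reduces numerator and denominator modulo $4$ using $-1\equiv 3$ and $2(-1)^k\equiv 2$, and finishes with $(1+2x)^{-1}\equiv 1+2x\pmod 4$. No dissection or pairing is needed. If you want to complete your route, you already have $\pd_2(q)\equiv(1+A(q^2))^2+qf_{12}^2\pmod 4$; what remains is precisely the two identities above, each of which needs a fully specified involution on representations, not an expectation that one exists.
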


\begin{proof}
Starting from \cite[Equation (3.13)]{ALL02}, we have
\begin{align*}
\pd_2(q) & =  \frac{\sum_{j \in \mathbb{Z}}q^{(3j)^2}- \sum_{j \in \mathbb{Z}}q^{(3j+1)^2}}{1+2 \sum_{k\ge 1}(-1)^k q^{k^2}}
=  \frac{1+ 2\sum_{k \geq 1, \, 3 \mid k}q^{k^2} - \sum_{k \geq 1, \, 3 \nmid k}q^{k^2}}{1+2 \sum_{k\ge 1}(-1)^k q^{k^2}}\\
&  \equiv \frac{1+ 2\sum_{k \geq 1, \, 3 \mid k}q^{k^2} + 3\sum_{k \geq 1, \, 3 \nmid k}q^{k^2}}{1+2 \sum_{k\ge 1}q^{k^2}}
=\frac{1+2 \sum_{k\ge 1}q^{k^2} + \sum_{k \geq 1, \, 3 \nmid k}q^{k^2}}{1+2 \sum_{k\ge 1}q^{k^2}}\\
& = 1+ \frac{\sum_{k \geq 1, \, 3 \nmid k}q^{k^2}}{1+2 \sum_{k\ge 1}q^{k^2}}
\equiv 1+ \Big( \sum_{k \geq 1, \, 3 \nmid k}q^{k^2}\Big) \Big(1+2 \sum_{k\ge 1}q^{k^2}\Big)\, \operatorname{mod} 4.\qedhere
\end{align*}
\end{proof}

%%%%%%%%%%%%%%%%%%%%%%%%%%%%%%%%%%%%%%
\section{A Dissection Proof for Theorem \ref{thm: pd4}}\label{sec: dissec}
%%%%%%%%%%%%%%%%%%%%%%%%%%%%%%%%%%%%%%

In this section, we will prove Theorem \ref{thm: pd4} with the help of dissections. This proof will reuse some of the identities from the last section. In addition, we will also need the following auxiliary result.

\begin{lemma}
	Working modulo $2$,
	\begin{align}\label{f1f3}
		q^2f^6_2f^6_6 \equiv q^2f^3_{16} + q^6f^3_{48}\, \operatorname{mod}2.
	\end{align}
\end{lemma}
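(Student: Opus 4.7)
The plan is to apply the identity
\[
\frac{f_3^3}{f_1}=\frac{f_4^3 f_6^2}{f_2^2 f_{12}}+q\frac{f_{12}^3}{f_4}
\]
from \cite[Equation (3.75)]{XY13} (already invoked in Lemma \ref{lemma-odd/even}), but with $q$ replaced by $q^4$. Since $f_k(q)|_{q\mapsto q^4}=f_{4k}(q)$, this substitution yields
\[
\frac{f_{12}^3}{f_4}=\frac{f_{16}^3 f_{24}^2}{f_8^2 f_{48}}+q^4\frac{f_{48}^3}{f_{16}}.
\]

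First, on the left-hand side of the desired congruence I would use the Frobenius relation $f_k^2\equiv f_{2k} \, \operatorname{mod} 2$ to reduce $f_2^6=(f_2^2)^3\equiv f_4^3$ and $f_6^6\equiv f_{12}^3$. This collapses the claim to showing
\[
f_4^3 f_{12}^3\equiv f_{16}^3+q^4 f_{48}^3 \, \operatorname{mod} 2.
\]

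To establish the latter, I would multiply the rescaled XY13 identity through by $f_4^4$, obtaining
\[
f_4^3 f_{12}^3=\frac{f_4^4 f_{16}^3 f_{24}^2}{f_8^2 f_{48}}+q^4\frac{f_4^4 f_{48}^3}{f_{16}}.
\]
Applying $f_4^4\equiv f_8^2$, $f_{24}^2\equiv f_{48}$, and $f_8^2\equiv f_{16}$ modulo $2$ then collapses the right-hand side to $f_{16}^3+q^4 f_{48}^3$, as required, and multiplication by $q^2$ recovers the stated congruence. The only nonobvious step is the decision to rescale the XY13 identity by $q\mapsto q^4$; after that, the argument is pure Frobenius bookkeeping on the $f_k$'s and poses no real obstacle.
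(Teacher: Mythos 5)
Your proof is correct. It follows the same overall strategy as the paper --- feed a $2$-dissection identity from \cite{XY13} into the Frobenius relation $f_k^2\equiv f_{2k}\,\operatorname{mod}2$ --- but with a different key input: the paper starts from the dissection of $1/(f_1f_3)$ given in \cite[Equation (3.12)]{XY13}, multiplies by $f_1^4f_3^4$ to obtain $f_1^3f_3^3\equiv f_4^3+qf_{12}^3\,\operatorname{mod}2$, and then raises that congruence to the fourth power, whereas you start from the dissection of $f_3^3/f_1$ in \cite[Equation (3.75)]{XY13} (already invoked in Lemma \ref{lemma-odd/even}), rescale $q\mapsto q^4$, and multiply by $f_4^4$. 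The two routes establish literally the same intermediate fact --- your $f_4^3f_{12}^3\equiv f_{16}^3+q^4f_{48}^3\,\operatorname{mod}2$ is exactly the $q\mapsto q^4$ image of the paper's $f_1^3f_3^3\equiv f_4^3+qf_{12}^3\,\operatorname{mod}2$ --- so the difference is essentially one of bookkeeping; your version has the small advantage of reusing an identity the paper already cites rather than importing a second one from \cite{XY13}, and all of your Frobenius cancellations (including those in denominators, which are legitimate since each $f_k$ is a unit in $\mathbb{Z}[[q]]$) check out.
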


\begin{proof}
	Using the identity (cf., \cite[Equation (3.12)]{XY13})
	\begin{align*}
		\dfrac{1}{f_1f_3}=\dfrac{f_8^2f_{12}^5}{f_2^2 f_4f_6^4f_{24}^2}+q\dfrac{f_4^5f_{24}^2}{f_2^4f_6^2f_8^2f_{12}},
	\end{align*}
	we have
	\begin{align*}
		f^3_1f^3_3 = \dfrac{f^4_1f^4_3 f_8^2f_{12}^5}{f_2^2 f_4f_6^4f_{24}^2}+q\dfrac{f^4_1f^4_3 f_4^5f_{24}^2}{f_2^4f_6^2f_8^2f_{12}}\equiv f_4^3 +qf_{12}^3\, \operatorname{mod}2,
	\end{align*}
	where we make repeated use of the identity $f_{2k} \equiv f_k^2 \, \operatorname{mod} 2$. Raising the last equation to the fourth power gives
	\begin{align*}
		f^6_2f^6_6 \equiv f_{16}^3 +q^4f_{48}^3\, \operatorname{mod}2,
	\end{align*}
	and the result follows.
\end{proof}

\noindent\textit{Alternate proof of Theorem \ref{thm: pd4}.} For a proof of Theorem \ref{thm: pd4}, we start with the generating function
\begin{align*}
	\pd_4(q)=\dfrac{f_{4}f_{6}f_8f_{12}}{f_1 f_2 f_3 f_{24}}\equiv \left[\dfrac{f_1^3}{f_3}\right]^3\, \operatorname{mod}2.
\end{align*}

Applying Equation \eqref{pdk p to the ell} and Lemma \ref{lemma-odd/even} one can provide the even and odd power dissection of $\pd_4(q)$,
\begin{align}
	\pd_4(q)&= \pd_2(q)\pd_2(q^2) \notag \\
	&\equiv \left(\left[\dfrac{f_2^3}{f_6}\right]^2+qf_{12}^2\right)\!\! \left(\left[\dfrac{f_4^3}{f_{12}}\right]^2+q^2f_{24}^2\right)
	\equiv \left(\left[\dfrac{f_2^3}{f_6}\right]^2+qf_{6}^4\right)\!\!\left(\left[\dfrac{f_2^3}{f_{6}}\right]^4+q^2f_{6}^8\right) \notag \\
	&\equiv \left[\dfrac{f_2^3}{f_6}\right]^6+q^2f_2^6 f_6^6+q f_2^{12}+q^3f_6^{12} \notag \\
	&\equiv \left[\dfrac{f_2^3}{f_6}\right]^6+q^2f_{16}^3+q^6f_{48}^3+q f_{8}^{3}+q^3f_{24}^{3} \,  \operatorname{mod} 2, \label{pd_4 dissection}
\end{align}
where the last line is a result of Equation \eqref{f1f3} and once again applying the identity $f_{2k} \equiv f_k^2 \, \operatorname{mod} 2$.
In Equation \eqref{pd_4 dissection}, noting that
\[
\left[\dfrac{f_2^3}{f_6}\right]^6\equiv \left[\dfrac{f_1^3}{f_3}\right]^{12}\equiv [\pd_4(q)]^4 \equiv \pd_4(q^4) \, \operatorname{mod} 2
\]
and applying Equation \eqref{f8 3 mod 4} to $q^r f_{8r}^3=q^r [f_8(q^r)]^3$, $r\in\{1,2,3,6\},$ yields
\begin{align*}
	\pd_4(q)\equiv \pd_4(q^4)+\sum_{n\geq1,\, 2\nmid n} \big[ q^{n^2}+ q^{2n^2}+ q^{3n^2}+ q^{6n^2}\big] \, \operatorname{mod} 2.
\end{align*}
Raising this equation to the fourth power yields
\begin{align*}
	\pd_4(q^4)\equiv \pd_4(q^{16})+\sum_{n\geq1,\, 2\nmid n} \big[ q^{(2n)^2}+ q^{2(2n)^2}+ q^{3(2n)^2}+ q^{6(2n)^2} \big] \, \operatorname{mod} 2,
\end{align*}
and combining the last two equations results in
\begin{align*}
	\pd_4(q)\equiv \pd_4(q^{4^2})+\sum_{\substack{n\geq1,\, 2\nmid n,\\ i\, \in \{0,1\}}} \big[ q^{(2^i n)^2}+ q^{2(2^i n)^2}+ q^{3(2^i n)^2}+ q^{6(2^i n)^2} \big] \, \operatorname{mod} 2.
\end{align*}

Therefore, iterating one obtains, for $\ell \ge 1$,
\begin{align*}
	\pd_4(q)\equiv \pd_4(q^{4^\ell})+\sum_{\substack{n\geq1,\, 2\nmid n,\\ 0\le i<\ell}} \big[ q^{(2^i n)^2}+ q^{2(2^i n)^2}+ q^{3(2^i n)^2}+ q^{6(2^i n)^2} \big] \, \operatorname{mod} 2,
\end{align*}
where the first summand on the right-hand side contains only powers of $q^{4^\ell}$. For $\ell \to \infty$, this results in the identity
\begin{align*}
	\pd_4(q) &\equiv 1 +\sum_{n\geq1,\, 2\nmid n,\, i\ge 0} \big[ q^{(2^i n)^2}+ q^{2(2^i n)^2}+ q^{3(2^i n)^2}+ q^{6(2^i n)^2} \big]\\
	&\equiv 1+\sum_{n\geq1} \big[ q^{n^2}+ q^{2n^2}+ q^{3n^2}+ q^{6n^2}\big] \, \operatorname{mod} 2,
\end{align*}
completing the proof of Theorem \ref{thm: pd4}.
\qed\medskip\\
We will end this section with a few Rogers-Ramanujan-type identities for $\pd_4(q)$.\medskip

Denote by $\varphi(q),\ \psi(q),$ and $f(a,b)$ Ramanujan's first, second, and general theta functions, respectively, defined by, cf. \cite[Entry 22(i), Entry 22(ii), Equation (18.1) and Entry 19]{Be91},
\begin{align*}
	\varphi(q)& =f(q,q)=\sum_{n\in\mathbb{Z}}q^{n^2}= \frac{(q^2;q^2)_\infty(-q;q^2)_\infty}{(q;q^2)_\infty(-q^2;q^2)_\infty}= \frac{f_2^2 (-q;q)_\infty}{f_1 (-q^2;q^2)^2_\infty}=\dfrac{f_2^5}{f_1^2 f_4^2},\\
	\psi(q) & = f(q,q^3)= \sum_{n\geq0}q^{n(n+1)/2}=\frac{(q^2;q^2)_\infty}{(q;q^2)_\infty} =\dfrac{f_2^2}{f_1},\\
	f(a,b) & =\sum_{n\in\mathbb{Z}}a^{n(n+1)/2}b^{n(n-1)/2}=(-a;ab)_\infty(-b;ab)_\infty(ab;ab)_\infty.
\end{align*}
Then, one can show the following identities by means of Theorem \ref{thm: pd4}.

\begin{theorem}
	Working modulo $2$, the following identities hold:
	\begin{align*}
		\quad\ \pd_4(q)\! \equiv\! \left[\dfrac{f_1^3}{f_3}\right]^3\! &\equiv \dfrac{f(q^6,q^{10})}{\psi(q)}+\dfrac{f(q^{18},q^{30})}{\psi(q^3)}-1 \notag \\
		&\equiv f_1^{13}(q^{6};q^{16})_\infty(q^{10};q^{16})_\infty+f_3^{13}(q^{18};q^{48})_\infty(q^{30};q^{48})_\infty-1 \notag \\
		& \equiv f_1^{13}(q^{6};q^{16})_\infty(q^{10};q^{16})_\infty+q^3f_3^{13}(q^{6};q^{48})_\infty(q^{42};q^{48})_\infty \notag \\
		& \equiv f_3^{13}(q^{18};q^{48})_\infty(q^{30};q^{48})_\infty+qf_1^{13}(q^{2};q^{16})_\infty(q^{14};q^{16})_\infty \, \operatorname{mod} 2.
	\end{align*}
\end{theorem}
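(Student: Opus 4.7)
The plan is to establish all the equivalent forms in the theorem by anchoring on two central identities: $\pd_4(q) \equiv [f_1^3/f_3]^3 \pmod 2$, proved directly, and $\pd_4(q) \equiv 1 + T(q) + T(q^2) + T(q^3) + T(q^6) \pmod 2$ (where $T(q) = \sum_{n\geq 1} q^{n^2}$), already established in Theorem \ref{thm: pd4}. The first identity follows by simplifying $\pd_4(q) = \frac{f_4 f_6 f_8 f_{12}}{f_1 f_2 f_3 f_{24}}$ via repeated application of $f_{2k} \equiv f_k^2 \pmod 2$, which collapses to $f_1^9/f_3^3$.

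For the theta-function expression, I would invoke the standard 2-dissection $\psi(q) = f(q^6, q^{10}) + q f(q^2, q^{14})$ (separating triangular exponents by parity of $n$ mod $4$) together with its signed companion $\psi(-q) = f(q^6, q^{10}) - q f(q^2, q^{14})$. Expanding as infinite products yields $\psi(q)\psi(-q) = (q^2;q^2)_\infty^2/(q^2;q^4)_\infty = f_2 f_4$, which gives the exact identity $\frac{f(q^6, q^{10})}{\psi(q)} = \tfrac{1}{2}\bigl(1 + \tfrac{\psi(-q)}{\psi(q)}\bigr)$, with $\psi(-q)/\psi(q) = f_1^2 f_4 / f_2^3$. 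The crux is then proving $\psi(-q)/\psi(q) \equiv \varphi(q)\varphi(q^2) \pmod 4$: after clearing denominators, this reduces to $f_1^4 f_8^2 \equiv f_2^6 f_4^2 \pmod 4$, which follows by iterated application of the mod-$4$ identity $f_{2k}^{2m} \equiv f_k^{4m} \pmod 4$ from Equation \eqref{mod 4 relation}, showing both sides equal $f_1^{20} \pmod 4$. Combined with the trivial expansion $\varphi(q)\varphi(q^2) = 1 + 2T(q) + 2T(q^2) + 4T(q)T(q^2)$, this produces $\frac{f(q^6, q^{10})}{\psi(q)} \equiv 1 + T(q) + T(q^2) \pmod 2$, and the $q\to q^3$ analogue gives $\frac{f(q^{18}, q^{30})}{\psi(q^3)} \equiv 1 + T(q^3) + T(q^6) \pmod 2$; summing and subtracting $1$ matches the formula from Theorem \ref{thm: pd4}.

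The three infinite-product forms follow from Jacobi's triple product $f(a,b) = (-a; ab)_\infty (-b; ab)_\infty (ab; ab)_\infty$ together with the mod-$2$ simplifications $(-x; q)_\infty \equiv (x; q)_\infty$ and $f_1 f_{16}/f_2^2 \equiv f_1^{13} \pmod 2$ (from $f_{16} \equiv f_1^{16}$ and $f_2^2 \equiv f_1^4 \pmod 2$). These yield $\frac{f(q^6, q^{10})}{\psi(q)} \equiv f_1^{13}(q^6;q^{16})_\infty (q^{10};q^{16})_\infty \pmod 2$ and its $q \to q^3$ version, giving the first product form. The remaining two forms are obtained by replacing the constant $-1$ via the 2-dissection modulo $2$, using $\frac{f(q^6, q^{10})}{\psi(q)} - 1 \equiv \frac{qf(q^2, q^{14})}{\psi(q)} \pmod 2$ (and the $q^3$ analogue, since $-1\equiv 1\pmod 2$), then converting to infinite products by the same Jacobi-triple-product reduction. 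The main obstacle is the mod-$4$ identity $\psi(-q)/\psi(q) \equiv \varphi(q)\varphi(q^2) \pmod 4$, which demands careful bookkeeping of the Frobenius-type reductions to legitimately divide by $2$ in the formula $\tfrac{1}{2}(1+\psi(-q)/\psi(q))$; once verified, everything else is routine manipulation of the stated theta identities.
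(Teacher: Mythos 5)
Your proposal is correct and shares the paper's overall architecture: both proofs anchor on Theorem \ref{thm: pd4} in the form $\pd_4(q)\equiv \tfrac{1}{2}\big[\varphi(q)+\varphi(q^2)+\varphi(q^3)+\varphi(q^6)-2\big]$, reduce everything to understanding $f(q^6,q^{10})/\psi(q)$ and its $q\mapsto q^3$ image, obtain the product forms from Jacobi's triple product together with $f_{2k}\equiv f_k^2$ and $\psi(q)\equiv f_1^3$ modulo $2$, and get the last two lines by trading $f(q^6,q^{10})-\psi(q)$ for $qf(q^2,q^{14})$ via the $2$-dissection of $\psi$. Where you genuinely diverge is the key intermediate step. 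The paper cites two identities from Berndt, namely $\varphi(-q)+\varphi(q^2)=2f^2(q^3,q^5)/\psi(q)$ and $f^2(q^3,q^5)\equiv f(q^6,q^{10})\equiv\psi(q)+qf(q^2,q^{14})$ modulo $2$, and combines them with $\varphi(q)\equiv\varphi(-q)$ modulo $4$. You instead derive $2f(q^6,q^{10})=\psi(q)+\psi(-q)$ directly from the parity dissection of the triangular numbers, reduce to the eta-quotient congruence $\psi(-q)/\psi(q)=f_1^2f_4/f_2^3\equiv\varphi(q)\varphi(q^2)$ modulo $4$, and verify the latter by cross-multiplying and collapsing both sides to $f_1^{20}$ with Equation \eqref{mod 4 relation}. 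I checked this congruence (it reduces to $f_1^4f_8^2\equiv f_2^6f_4^2$ modulo $4$, which holds), and your division by $2$ is legitimate since both $1+\psi(-q)/\psi(q)=2f(q^6,q^{10})/\psi(q)$ and $1+\varphi(q)\varphi(q^2)$ have all coefficients even. Your route buys self-containedness, replacing the two external Berndt citations with an elementary dissection plus a Frobenius-type computation, at the cost of one extra mod-$4$ eta-quotient verification; the paper's route is shorter on the page but leans on the literature.
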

\begin{proof}
	Theorem \ref{thm: pd4} claims
	\begin{align}\label{pd4 sum}
		\pd_4(q)= \frac{f_4f_6f_8f_{12}}{f_1f_2f_3f_{24}} \equiv\left[\dfrac{f_1^3}{f_3}\right]^3\!\equiv \dfrac{1}{2}\big[\varphi(q)+\varphi(q^2)+\varphi(q^3)+\varphi(q^6)-2\big] \,\!  \operatorname{mod} 2,
	\end{align}
	where we make repeated use of the identity $f_{2k} \equiv f_k^2 \, \operatorname{mod} 2$.
	The identities \cite[Example (iv), p. 51]{Be91} and \cite[Corollary (ii), p. 49]{Be91} imply, working modulo $4$ and $2$, respectively,
	\begin{align}\label{theta identity}
		\varphi(q)+\varphi(q^2)&\equiv \varphi(-q)+\varphi(q^2) = 2\dfrac{f^2(q^3,q^5)}{\psi(q)} \,  \operatorname{mod} 4,\\
		f^2(q^3,q^5)&\equiv f(q^6,q^{10})\equiv \psi(q)+qf(q^2,q^{14}) \,  \operatorname{mod} 2,\label{f identity}
	\end{align}
	while for $\psi(q)$ we have the identity
	\begin{align}\label{psi identity}
		\psi(q) = \dfrac{f_2^2}{f_1} \equiv f_1^3 \,  \operatorname{mod} 2.
	\end{align}
	Substituting Equations \eqref{theta identity} and \eqref{psi identity} into the right-hand side of Equation \eqref{pd4 sum} and applying the definition of $f(a,b)$, working modulo $2$ yields
	\begin{align}
		\pd_4(q)\equiv\left[\dfrac{f_1^3}{f_3}\right]^3 & \equiv \dfrac{1}{2}\big[\varphi(q)+\varphi(q^2)\big]+ \dfrac{1}{2}\big[\varphi(q^3)+\varphi(q^6)\big] -1 \notag \\
		&\equiv \dfrac{f^2(q^3,q^5)}{\psi(q)}+\dfrac{f^2(q^9,q^{15})}{\psi(q^3)}-1 \equiv \dfrac{f(q^6,q^{10})}{f_1^3}+\dfrac{f(q^{18},q^{30})}{f_3^3}-1 \notag \\
		&\equiv f_1^{13}(q^{6};q^{16})_\infty(q^{10};q^{16})_\infty+f_3^{13}(q^{18};q^{48})_\infty(q^{30};q^{48})_\infty-1 \, \operatorname{mod} 2. \notag
	\end{align}
	Now applying Equations \eqref{f identity} and  \eqref{psi identity} and the definition of $f(a,b)$, working modulo $2$ one obtains
	\begin{align}
		\pd_4(q)\equiv\left[\dfrac{f_1^3}{f_3}\right]^3& \equiv \dfrac{f^2(q^3,q^5)}{\psi(q)}+\dfrac{f^2(q^9,q^{15})}{\psi(q^3)}-1 \equiv \dfrac{f(q^6,q^{10})}{\psi(q)}+q^3\dfrac{f(q^6,q^{42})}{\psi(q^3)} \notag \\
		&\equiv f_1^{13}(q^{6};q^{16})_\infty(q^{10};q^{16})_\infty+q^3f_3^{13}(q^{6};q^{48})_\infty(q^{42};q^{48})_\infty \, \operatorname{mod} 2 \notag
	\end{align}
	and
	\begin{align}
		\pd_4(q)\equiv\left[\dfrac{f_1^3}{f_3}\right]^3& \equiv \dfrac{f^2(q^3,q^5)}{\psi(q)}+\dfrac{f^2(q^9,q^{15})}{\psi(q^3)}-1\equiv \dfrac{f(q^{18},q^{30})}{\psi(q^3)}+q\dfrac{f(q^2,q^{14})}{\psi(q)}\notag \\
		&\equiv f_3^{13}(q^{18};q^{48})_\infty(q^{30};q^{48})_\infty+qf_1^{13}(q^{2};q^{16})_\infty(q^{14};q^{16})_\infty \, \operatorname{mod} 2.\qedhere \notag
	\end{align}
\end{proof}

%%%%%%%%%%%%%%%%%%%%%%%%%%%%%%%%%%%%%%
\section{The Case of $\pd_{3^\ell}(q) \, \operatorname{mod} 3$}\label{sec: mod 3}
%%%%%%%%%%%%%%%%%%%%%%%%%%%%%%%%%%%%%%

\begin{definition}
	Let
	$$ h(q) = \frac{f_1(q)^2}{f_{2}(q)},$$
noting by Gauss's square power identity \cite[Equation (2.2.12)]{partitions},
	\begin{equation} \label{h}
		h(q) = \dfrac{(q;q)_\infty}{(-q;q)_\infty} = 1 + 2 \sum_{m \geq 1} (-1)^m q^{m^2}.
	\end{equation}
\end{definition}

Using the previous definition, we prove the following.

\begin{theorem} \label{pd 3 mod 3}
	Working modulo $3$,
	$$ \pd_3(q) \equiv \sum_{n \geq 0}e_{n1} q^n \, \operatorname{mod} 3 $$
	where
	\begin{align*}
		e_{n1} &= \big|\{\text{solutions to } n = k_0^2  + 3 k_1^2\, | \, \\
		&\quad\qquad\; k_m \in \mathbb{Z} \text{ or } \mathbb{N} \text{ when $k_m$ is even or odd, respectively}  \}\big|.
	\end{align*}
\end{theorem}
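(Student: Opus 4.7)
The plan is to derive a clean factored expression for $\pd_3(q)$ modulo $3$ using the Frobenius automorphism, and then expand via Gauss's identity \eqref{h} to recognize the $e_{n1}$ count.

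First, Equation \eqref{pdk mod p} with $p=3$ gives $\pd_3(q) \equiv g(q)^2 \, \operatorname{mod} 3$. Using the Frobenius congruences $f_3 \equiv f_1^3$ and $f_6 \equiv f_2^3 \, \operatorname{mod} 3$, the definition $g(q) = f_1 f_2 f_3 / f_6$ collapses to $g(q) \equiv f_1^4/f_2^2 = h(q)^2 \, \operatorname{mod} 3$. Therefore $\pd_3(q) \equiv h(q)^4 \, \operatorname{mod} 3$. A further Frobenius step applied to $h$ itself (via $f_1^6 = (f_1^3)^2 \equiv f_3^2$ and $f_2^3 \equiv f_6$) yields $h(q)^3 \equiv h(q^3) \, \operatorname{mod} 3$, producing the clean factored form
\[
\pd_3(q) \equiv h(q)\, h(q^3) \, \operatorname{mod} 3.
\]

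Next, I would use Gauss's identity \eqref{h} in the symmetric form $h(q) = \sum_{k \in \mathbb{Z}}(-1)^k q^{k^2}$. The key reduction rewrites this modulo $3$ as an unsigned sum over a restricted range: split by parity of $k$, observe that the odd-$k$ terms pair as $k \leftrightarrow -k$ with common sign $-1$ contributing $-2 \sum_{k \geq 1,\, k \text{ odd}} q^{k^2}$, and use $-2 \equiv 1 \, \operatorname{mod} 3$ to obtain
\[
h(q) \equiv \sum_{\substack{k \in \mathbb{Z} \\ k \text{ even}}} q^{k^2} + \sum_{\substack{k \geq 1 \\ k \text{ odd}}} q^{k^2} \, \operatorname{mod} 3.
\]
The analogous formula holds for $h(q^3)$ with $q^{k_1^2}$ replaced by $q^{3 k_1^2}$.

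Multiplying these two expansions immediately gives $\sum_n e_{n1} q^n$, since the admissible ranges for $k_0$ and $k_1$ match precisely the conditions in the definition of $e_{n1}$. The main obstacle is the sign bookkeeping in the previous step: one must carefully verify that the $(-1)^k$ sign pattern of Gauss's identity collapses correctly modulo $3$ into the unsigned count with the prescribed ``even ranges over $\mathbb{Z}$, odd ranges over $\mathbb{N}$'' restriction. Once this reinterpretation of $h(q) \, \operatorname{mod} 3$ is in hand, the remainder of the argument is essentially a tautology.
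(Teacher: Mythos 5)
Your proposal is correct and follows essentially the same route as the paper: reduce to $\pd_3(q)\equiv g(q)^2\equiv h(q)^4\equiv h(q)h(q^3)\, \operatorname{mod} 3$ via Frobenius, then reinterpret Gauss's identity \eqref{h} modulo $3$ as the unsigned sum over even $k\in\mathbb{Z}$ and odd $k\geq 1$ (using $-2\equiv 1$), and multiply. The sign bookkeeping you flag as the main obstacle is exactly the step the paper performs, and your version of it checks out.
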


\begin{proof}
	Using $p = 3$ in Equation \eqref{pdk mod p}, we get
	$$ \pd_3(q) \equiv g(q)^2 \, \operatorname{mod} 3.$$
	As
	$$ g(q) = \frac{f_1(q) f_{2}(q) f_{3}(q)}{f_{6}(q)}
	\equiv h(q)^2 \, \operatorname{mod} 3, $$
	it follows, in a similar vein to Equation \eqref{pdk mod p}, that
	\begin{align} \label{pd 3 h}
	\pd_3(q) \equiv h(q)^4 = h(q) h(q)^3  \equiv h(q) h(q^3) \, \operatorname{mod} 3.
	\end{align}
	By Equation \eqref{h}, we may write
	$$ h(q) \equiv \sum_{m \in \mathbb{Z},\, 2\mid m} q^{m^2}
	+ \sum_{m\ge 1,\, 2\nmid m} q^{m^2} \, \operatorname{mod} 3.$$
	The result follows.
\end{proof}

\begin{remark}\label{remark pd3}
Note that $e_{n1}$ gives an interesting alternate characterization of \\
$\PD_3(n)  \, \operatorname{mod} 3$ to the one given in \cite[Theorem 2]{dS20b} where it was shown that, \\
for $n\ge 1$,
$$ \PD_3(n) \equiv \big|\{\text{solutions to } n = k(k + 1) + 3m(m+ 1) + 1 \, | \, k,m\ge 0\}\big| \, \operatorname{mod} 3.$$
Note also that we have
\[n = k(k + 1) + 3m(m+ 1) + 1 \iff 4n = (2k+1)^2 +3(2m+1)^2.\]
Thus, for $n\ge 1$, we can also write
	\begin{align*}
		\PD_3(n) & \equiv \big|\{\text{solutions to } 4n = k_0^2 +3k_1^2\, | \, k_m\ge 0, k_m \text{ odd}\}\big|\\
 &  \equiv \big|\{\text{solutions to } 4n = k_0^2 +3k_1^2\, | \, k_m \in \mathbb{Z}, k_m \text{ odd}\}\big| \, \operatorname{mod} 3.
	\end{align*}
\end{remark}

Theorem \ref{pd 3 mod 3} extends quite naturally as seen by the next result.

\begin{theorem} \label{our pd3}
For all $n\geq0$,
	$$ \PD_{3^\ell}(n) \equiv e_{n \ell} \, \operatorname{mod} 3 $$
	where
	\begin{align*}
		e_{n \ell} & =\Big|\{\text{solutions to } n = k_0^2 + \sum_{m=1}^{\ell -1} 3^m(k_m^2 + k_m'^2) + 3^\ell k_\ell^2\, | \, \\
		&\quad\qquad\; k_m, k_m' \in \mathbb{Z} \text{ or } \mathbb{N} \text{ when $k_m, k_m'$ is even or odd, respectively}  \}\Big|.
	\end{align*}
\end{theorem}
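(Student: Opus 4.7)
The plan is to iterate Equation \eqref{pdk p to the ell} with $p = 3$ and substitute the factorization $\pd_3(q) \equiv h(q)\, h(q^3) \, \operatorname{mod} 3$ that was established (as Equation \eqref{pd 3 h}) in the proof of Theorem \ref{pd 3 mod 3}.

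First, applying Equation \eqref{pdk p to the ell} gives
$$\pd_{3^\ell}(q) = \prod_{m=0}^{\ell-1} \pd_3(q^{3^m}).$$
Substituting $q \mapsto q^{3^m}$ into the congruence from Equation \eqref{pd 3 h} yields $\pd_3(q^{3^m}) \equiv h(q^{3^m})\, h(q^{3^{m+1}}) \, \operatorname{mod} 3$, so
$$\pd_{3^\ell}(q) \equiv \prod_{m=0}^{\ell-1} h(q^{3^m})\, h(q^{3^{m+1}}) \, \operatorname{mod} 3.$$
In this product, $h(q) = h(q^{3^0})$ and $h(q^{3^\ell})$ each appear exactly once, while every intermediate factor $h(q^{3^m})$ with $1 \le m \le \ell-1$ appears exactly twice (once from index $m-1$ and once from index $m$). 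Therefore
$$\pd_{3^\ell}(q) \equiv h(q) \cdot \prod_{m=1}^{\ell-1} h(q^{3^m})^2 \cdot h(q^{3^\ell}) \, \operatorname{mod} 3.$$

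Second, I would expand each factor using
$$h(q^{3^m}) \equiv \sum_{k \in \mathbb{Z},\ 2\mid k} q^{3^m k^2} + \sum_{k \geq 1,\ 2\nmid k} q^{3^m k^2} \, \operatorname{mod} 3,$$
which was also derived in the proof of Theorem \ref{pd 3 mod 3}. The outermost factors $h(q)$ and $h(q^{3^\ell})$ each contribute a single summation variable ($k_0$ and $k_\ell$, respectively), while each squared factor $h(q^{3^m})^2$ for $1 \le m \le \ell-1$ contributes a pair of independent variables $(k_m, k_m')$. Multiplying out and collecting the coefficient of $q^n$ produces precisely the count $e_{n\ell}$, with each variable ranging over $\mathbb{Z}$ when even and over $\mathbb{N}$ when odd, matching the convention in the statement.

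Since the whole argument reduces to direct substitution and expansion, I do not anticipate a substantial obstacle. The only care required is tracking the parity conventions on the summation ranges so that the formal count of tuples matches the defining expression for $e_{n\ell}$; in particular, the factor $h(q)$ accounts for $k_0$, the final factor $h(q^{3^\ell})$ accounts for $k_\ell$, and it is crucial that the telescoping leaves exactly two copies of each intermediate $h(q^{3^m})$ so that the mixed terms $3^m(k_m^2 + k_m'^2)$ in the theorem's sum appear correctly.
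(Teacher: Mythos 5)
Your proposal is correct and follows essentially the same route as the paper: both apply Equation \eqref{pdk p to the ell} with $p=3$, substitute the factorization $\pd_3(q)\equiv h(q)h(q^3) \operatorname{mod} 3$ from Equation \eqref{pd 3 h}, collect the telescoping product into $h(q)\prod_{m=1}^{\ell-1}h(q^{3^m})^2\,h(q^{3^\ell})$, and read off $e_{n\ell}$ from the series expansion of $h$. Your write-up merely makes explicit the bookkeeping that the paper leaves to ``The result follows.''
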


\begin{proof}
	Using Equations \eqref{pdk p to the ell} and \eqref{pd 3 h},
	$$ \pd_{3^\ell}(q) = \prod_{m = 0}^{\ell - 1} \pd_3(q^{3^m})
	\equiv  h(q) \, \bigg[\prod_{m = 1}^{\ell - 1} h(q^{3^m})^2 \bigg] \, h(q^{3^\ell}) \, \operatorname{mod} 3.   $$
	The result follows.
\end{proof}

As an easy application of Theorem \ref{our pd3}, we provide very short proofs of the following congruences for which proofs using dissections can be found in \cite[Theorem 3]{dS20b} and \cite[Theorem 3]{dS20a} for $\PD_3(9n+6)$ and $\PD_{3k}(3n+2)$, respectively.

\begin{theorem}\label{thm: cong 3}
For $n\geq0$,
\begin{align*}
\PD_{3}(9n+6)\equiv 0 \, \operatorname{mod} 3.
\end{align*}
For all $n\geq0$ and $\ell\geq1$,
\begin{align*}
\PD_{3^\ell}(3n+2)\equiv 0 \, \operatorname{mod} 3.
\end{align*}
\end{theorem}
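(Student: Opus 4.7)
The plan is to deduce both congruences essentially for free from the characterizations already established in Theorem \ref{pd 3 mod 3} and Theorem \ref{our pd3}, by showing that in each case the relevant set of representations is empty for elementary reasons modulo $3$. The key observation is that every square is congruent to $0$ or $1$ modulo $3$, so certain linear combinations of squares cannot represent numbers in certain residue classes mod $3$.

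For the second congruence, I would apply Theorem \ref{our pd3} to get $\PD_{3^\ell}(3n+2) \equiv e_{3n+2,\ell} \pmod 3$, where $e_{3n+2,\ell}$ counts representations
$$3n+2 = k_0^2 + \sum_{m=1}^{\ell-1} 3^m(k_m^2 + k_m'^2) + 3^\ell k_\ell^2.$$
Reducing modulo $3$, every term after $k_0^2$ vanishes, so such a representation forces $k_0^2 \equiv 2 \pmod 3$, which is impossible. Hence $e_{3n+2,\ell} = 0$, giving the claim.

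For the first congruence, I would apply Theorem \ref{pd 3 mod 3}: $\PD_3(9n+6) \equiv e_{9n+6,1} \pmod 3$, where $e_{9n+6,1}$ counts representations $9n+6 = k_0^2 + 3k_1^2$ (with the appropriate sign conventions on $k_0, k_1$, which are irrelevant for this argument). Reducing modulo $3$ gives $k_0^2 \equiv 0 \pmod 3$, so $3 \mid k_0$; writing $k_0 = 3k_0'$ and dividing the equation by $3$ yields $3n + 2 = 3 k_0'^2 + k_1^2$. This now forces $k_1^2 \equiv 2 \pmod 3$, again impossible. Thus $e_{9n+6,1} = 0$ and the congruence follows.

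There is no real obstacle in this proof; the content is entirely in the prior characterizations, and both congruences reduce to the elementary quadratic residue constraint modulo $3$. This is precisely why these results, which previously required dissection-based arguments as in \cite[Theorem 3]{dS20b} and \cite[Theorem 3]{dS20a}, admit such short proofs within the framework developed here.
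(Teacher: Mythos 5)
Your proposal is correct and follows essentially the same route as the paper: both congruences are read off from the representation counts $e_{n\ell}$ of Theorem \ref{our pd3}, which are shown to vanish because $0$ and $1$ are the only quadratic residues modulo $3$. The only cosmetic difference is in the first congruence, where the paper lists the quadratic residues modulo $9$ directly while you extract $3\mid k_0$ and divide by $3$ to reduce to the same mod-$3$ obstruction; both computations are equally valid.
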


\begin{proof}
For the first congruence, if $e_{n1}\neq0$ in Theorem \ref{our pd3}, then we can write, for suitable integers $k_0, k_1,$
\begin{align*}
n= k_0^2 + 3 k_1^2.
\end{align*}
A straightforward calculation shows that $n\not\equiv 6 \, \operatorname{mod} 9$ as $0,1,4,7$ are the only quadratic residues modulo 9. Hence, $e_{n1}=0$ for all $n\equiv6 \, \operatorname{mod} 9$.

The second congruence follows similarly. If $e_{n\ell}\neq0$ in Theorem \ref{our pd3}, then, since $0,1$ are the only quadratic residues modulo 3, we can write
\begin{align*}
n= k_0^2 + \sum_{m=1}^{\ell -1} 3^m(k_m^2 + k_m'^2) + 3^\ell k_\ell^2\equiv k_0^2 \not\equiv 2 \, \operatorname{mod} 3.
\end{align*}
Thus, $e_{n\ell}=0$ for all $n\equiv2 \, \operatorname{mod} 3$.
\end{proof}

As a further application of Theorem \ref{our pd3}, we prove the following newly observed congruences. We have not found the first congruence in the literature, but expect it to be known.

\begin{theorem}\label{thm: cong 4}
For $n\geq1$,
\begin{align*}
\PD_{3}(2n)\equiv 0 \, \operatorname{mod} 3.
\end{align*}
For all $n\geq0$ and $\ell\geq3$,
\begin{align*}
\PD_{3^\ell}(27n+9)\equiv 0 \, \operatorname{mod} 3.
\end{align*}
For all $n\geq0$ and $\ell\neq2$,
\begin{align*}
\PD_{3^\ell}(27n+18)\equiv 0 \, \operatorname{mod} 3.
\end{align*}
\end{theorem}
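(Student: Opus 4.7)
The plan is to handle the three congruences using Theorem~\ref{our pd3} (together with its $\ell=1$ form, Theorem~\ref{pd 3 mod 3}) and free $\mathbb{Z}/3$-actions on the underlying representation sets. For the first congruence, I would combine the identity $\pd_3(q) \equiv h(q)h(q^3) \, \operatorname{mod} 3$ from the proof of Theorem~\ref{pd 3 mod 3} with $h(q) = \sum_{m \in \mathbb{Z}} (-1)^m q^{m^2}$ to get $\pd_3(q) \equiv \sum_{a,b \in \mathbb{Z}} (-1)^{a+b} q^{a^2 + 3b^2} \, \operatorname{mod} 3$. At $q^{2n}$, the parity match $a \equiv b \, \operatorname{mod} 2$ forced by $a^2 + 3b^2$ being even makes $(-1)^{a+b} = 1$, so $\PD_3(2n)$ is congruent modulo $3$ to the unrestricted count $|\{(a,b) \in \mathbb{Z}^2 : a^2 + 3b^2 = 2n\}|$. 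To show this count is $\equiv 0 \, \operatorname{mod} 3$ for $n \geq 1$, I would exploit multiplication by $\omega = (-1+\sqrt{-3})/2$: a direct calculation gives $\omega(a+b\sqrt{-3}) = \tfrac{-a-3b}{2} + \tfrac{a-b}{2}\sqrt{-3}$, which lies in $\mathbb{Z}[\sqrt{-3}]$ since $a \equiv b \, \operatorname{mod} 2$, preserves the norm, has $\omega^3 = 1$, and fixes only $z = 0$, so the nonempty set of norm-$2n$ elements breaks into free $\mathbb{Z}/3$-orbits of size $3$.

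For the $\ell \geq 3$ cases in Parts 2 and 3, with $N = 27n + r$ and $r \in \{9, 18\}$, reduction of the representation in Theorem~\ref{our pd3} modulo $3$ and then modulo $9$ forces $3 \mid k_0, k_1, k_1'$. Substituting $k_0 = 3j_0$, $k_1 = 3j_1$, $k_1' = 3j_1'$ and dividing by $9$ gives
\[
j_0^2 + 3(j_1^2 + j_1'^2) + (k_2^2 + k_2'^2) + \sum_{m=3}^{\ell-1} 3^{m-2}(k_m^2 + k_m'^2) + 3^{\ell-2} k_\ell^2 = 3n + r/9.
\]
The three variables $j_0, k_2, k_2'$ now appear with coefficient $1$ and share identical parity conditions, so cyclically permuting their values defines a $\mathbb{Z}/3$-action on the solution set. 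A fixed point $j_0 = k_2 = k_2'$ would make the left-hand side a multiple of $3$ (every other term is already a multiple of $3$ when $\ell \geq 3$), contradicting $r/9 \in \{1,2\}$ modulo $3$; hence the action is free and the count is divisible by~$3$.

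For $\ell = 1$ in Part 3, the analogous mod $3$ / mod $9$ reduction leaves $j_0^2 + 3 j_1^2 = 3n + 2$, which is impossible modulo $3$ since $j_0^2 \in \{0,1\}$, so $e_{27n+18,1} = 0$. The main technical point is verifying that the cyclic $\mathbb{Z}/3$-action in Parts 2 and 3 respects the parity conditions of Theorem~\ref{our pd3}; this holds because each of $j_0, k_2, k_2'$ carries the symmetric condition ``$\in \mathbb{Z}$ if even, $\in \mathbb{N}$ if odd,'' which depends only on the value rather than the index, so permuting values among variables with identical conditions takes valid tuples to valid tuples.
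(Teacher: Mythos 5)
Your proof is correct. For the second and third congruences it is essentially the paper's own argument: reducing mod $3$ and then mod $9$ forces $3\mid k_0,k_1,k_1'$, and the cyclic permutation of the three variables that end up with coefficient $1$ (namely $k_0/3$, $k_2$, $k_2'$) has no fixed points because $j_0^2+k_2^2+k_2'^2\equiv r/9\not\equiv 0 \,\operatorname{mod}\, 3$; the paper's map $\Psi$ is exactly this permutation written in the undivided variables, and your explicit check that the parity/sign conditions of Theorem \ref{our pd3} depend only on a variable's value (so the permutation preserves admissibility) is the point the paper uses implicitly. Your $3$-adic descent for the $\ell=1$ case of the third congruence is equivalent to the paper's appeal to the quadratic residues mod $27$. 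The genuine divergence is the first congruence: the paper disposes of it in one line by citing Theorem \ref{pd3}, since $k(k+1)+3m(m+1)+1$ is always odd, whereas you expand $\pd_3(q)\equiv h(q)h(q^3)\equiv\sum_{a,b}(-1)^{a+b}q^{a^2+3b^2}\,\operatorname{mod}\,3$, observe that all signs are $+1$ in even degree, and show $3$ divides the number of representations of $2n$ by $x^2+3y^2$ via the free action of the order-$3$ unit $\omega=(-1+\sqrt{-3})/2$ on the nonzero even-norm elements of $\mathbb{Z}\bigl[\sqrt{-3}\bigr]$. This is heavier than the paper's one-liner, but it is self-contained (it does not import the external characterization of Theorem \ref{pd3}), it is uniform with the free $\mathbb{Z}/3$-action philosophy of the other two parts, and it is analogous in spirit to Remark \ref{rem:4.2}; the one delicate step --- that $\omega z$ stays in $\mathbb{Z}\bigl[\sqrt{-3}\bigr]$ precisely because $a\equiv b \,\operatorname{mod}\, 2$ is automatic for even norm --- is handled correctly.
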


\begin{proof}
The first congruence is an immediate consequence of Theorem \ref{pd3} as
$n=k(k+1)+3m(m+1)+1$
is odd for nonnegative integers $k,m$.

The case $\ell=1$ of the third congruence follows similarly to that of the first congruence in the previous theorem. In particular, a straightforward calculation shows that $n=k_0^2+3k_1^2\not\equiv 18 \, \operatorname{mod} 27$ as $0, 1, 4, 7, 9, 10, 13, 16, 19, 22,$ and $25$ are the only quadratic residues modulo 27. Hence, $e_{n1}=0$ for all $n\equiv18 \, \operatorname{mod} 27$.

For $\ell\geq3$ in the third congruence, we argue as follows. We want to count the number of solutions to the Diophantine equation
\begin{align}\label{27n+18}
27n+18= k_0^2 + \sum_{m=1}^{\ell -1} 3^m(k_m^2 + k_m'^2) + 3^\ell k_\ell^2
\end{align}
modulo 3 with sign constraints as stated in Theorem \ref{pd 3 mod 3}. Note that we have $k_0^2\equiv 0 \, \operatorname{mod} 3$, thus $3\mid k_0$ and we can write $k_0=3a$ for some integer $a$. This gives
\begin{align}\label{27n+18A}
27n+18= 3(k_1^2+k_1'^2) + 9(a^2 + k_2^2 + k_2'^2) + \ldots + 3^\ell k_\ell^2.
\end{align}
In particular, $3(k_1^2+k_1'^2)\equiv 0 \, \operatorname{mod} 9$. Thus, $3\mid (k_1^2+k_1'^2)$, but as 0 and 1 are the only quadratic residues modulo 3, we conclude $3\mid k_1,k_1'$. Hence $3(k_1^2+k_1'^2)\equiv 0 \, \operatorname{mod} 27$, and Equation \eqref{27n+18A} implies
\begin{align*}
9(a^2 + k_2^2 + k_2'^2)\equiv 18 \, \operatorname{mod} 27,
\end{align*}
so that we conclude
\begin{align}\label{27n+18 mod 3}
a^2 + k_2^2 + k_2'^2\equiv 2 \, \operatorname{mod} 3.
\end{align}

It is immediate that, together with $(3a,k_1,k_1',k_2,k_2',k_3,k_3',\ldots,k_{\ell})$, also \linebreak $(3k_2,k_1,k_1',k_2',a,k_3,k_3',\ldots,k_{\ell})$ and $(3k'_2,k_1,k_1',a,k_2,k_3,k_3',\ldots,k_{\ell})$ solve Equation \eqref{27n+18}.
As a result, the mapping
\begin{align*}
\Psi((k_0,k_1,k_1',k_2,k_2',k_3,k_3',\ldots,k_{\ell}))=(3k_2,k_1,k_1',k_2',k_0/3,k_3,k_3',\ldots,k_{\ell}),
\end{align*}
defines a bijection on the solution set of \eqref{27n+18} with $\Psi^3=\text{id}$. The proof will be finished by showing that each orbit of the solution set under iterates of $\Psi$ has order~$3$. To see this, by way of contradiction, suppose
\begin{align*}
(3a,k_1,k_1',k_2,k_2',k_3,k_3',\ldots,k_{\ell})=(3k_2,k_1,k_1',k_2',a,k_3,k_3',\ldots,k_{\ell}).
\end{align*}
Comparing components yields $a=k_2=k_2'$, hence $a^2 + k_2^2 + k_2'^2\equiv 0 \, \operatorname{mod} 3$, contradicting \eqref{27n+18 mod 3} and completing the proof of the third congruence.

The second congruence follows analogously by systematically replacing 18 by 9 in the proof for $\ell\geq3$.
\end{proof}

It is noteworthy that we can recreate some of our structural results on $\pd_{2^\ell}(q)$ modulo $2$ from Section \ref{sec: mod 2} in the context of $\pd_{3^\ell}(q)$ modulo $3$. In particular,
the grouping argument of the last theorem once again can be iterated.

\begin{theorem}\label{thm: cong 4B}
For all $n\geq0$, $\ell \ge 2$, we have
\begin{align*}
\PD_{3^\ell}(3n)\equiv e^*_{n \ell}  \, \operatorname{mod}  3
\end{align*}
where
	\begin{align*}
		e^*_{n \ell} & = \big|\{\text{solutions to } n = k_0^2+k_0'^2+ 3^{\ell-1}(k_1^2 +k_1'^2) \, | \, \\
		&\quad\qquad\; k_m,k_m' \in \mathbb{Z} \text{ or } \mathbb{N} \text{ when $k_m,k_m'$ is even or odd, respectively}\}\big|.
	\end{align*}
\end{theorem}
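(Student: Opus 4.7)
My plan is to mirror the strategy of Theorems \ref{thm: cong 1B} and \ref{thm: cong 4}: apply Theorem \ref{our pd3} with $n$ replaced by $3n$, extract the forced divisibility $3 \mid k_0$, and then iteratively collapse the interior variables using an order-$3$ permutation analogous to the one introduced in Theorem \ref{thm: cong 4}.

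By Theorem \ref{our pd3}, $\PD_{3^\ell}(3n) \equiv e_{3n,\ell} \, \operatorname{mod} 3$, where $e_{3n,\ell}$ counts tuples satisfying
\[3n = k_0^2 + \sum_{m=1}^{\ell-1} 3^m (k_m^2 + k_m'^2) + 3^\ell k_\ell^2\]
under the usual parity-based sign constraints. Since every summand other than $k_0^2$ is divisible by $3$, we must have $3 \mid k_0$, and we write $k_0 = 3a$; note that $a$ inherits the same parity-based constraint as $k_0$, since multiplication by the odd integer $3$ preserves both parity and sign. Substituting and dividing by $3$ yields
\[n = 3a^2 + (k_1^2 + k_1'^2) + 3(k_2^2 + k_2'^2) + \cdots + 3^{\ell-2}(k_{\ell-1}^2 + k_{\ell-1}'^2) + 3^{\ell-1} k_\ell^2.\]
For $\ell = 2$ this already reads $n = (k_1^2 + k_1'^2) + 3(a^2 + k_2^2)$, matching $e^*_{n,2}$ verbatim under the relabeling $(k_1, k_1', a, k_2) \mapsto (k_0, k_0', k_1, k_1')$.

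For $\ell \geq 3$ I would iterate, for $j = 1, 2, \ldots, \ell - 2$, the following step. The accumulated diagonal variable $b_{j-1}$ (with $b_0 := a$) sits alongside $k_{j+1}, k_{j+1}'$, all three appearing with coefficient $3^j$. The cyclic permutation of these three values defines an order-$3$ action on the solution set whose orbits have size $1$ or $3$; modulo $3$ only the fixed points $b_{j-1} = k_{j+1} = k_{j+1}' =: b_j$ contribute. Substituting this equality absorbs $k_{j+1}, k_{j+1}'$ into the diagonal and raises its coefficient from $3^j$ to $3 \cdot 3^j = 3^{j+1}$. After $\ell - 2$ such steps the equation collapses to $n = (k_1^2 + k_1'^2) + 3^{\ell-1}(b_{\ell-2}^2 + k_\ell^2)$, which is precisely the equation defining $e^*_{n,\ell}$ under the relabeling $(k_1, k_1', b_{\ell-2}, k_\ell) \mapsto (k_0, k_0', k_1, k_1')$.

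The main obstacle will be verifying that the order-$3$ permutation is genuinely well-defined on the solution set at each iteration: one must check that permuting three values among themselves preserves the parity-based sign constraints---the key being that these constraints are intrinsic to each value (its parity and sign) rather than to its position in the tuple---and that non-fixed orbits have size exactly $3$ (automatic since $3$ is prime). A secondary bookkeeping concern is tracking how the coefficient of the accumulated diagonal variable grows from $3^j$ to $3^{j+1}$ after each absorption, and confirming that the iteration terminates precisely when the last interior pair $k_{\ell-1}, k_{\ell-1}'$ has been absorbed, leaving exactly the two coefficient-$3^{\ell-1}$ variables $b_{\ell-2}$ and $k_\ell$ to form the second pair in $e^*_{n,\ell}$.
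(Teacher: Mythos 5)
Your proposal is correct and follows essentially the same route as the paper: after extracting $3\mid k_0$ and writing $k_0=3a$, the paper groups solutions into triples via exactly this cyclic permutation of the three values carrying a common coefficient (its map $\Psi$ with $\Psi^3=\mathrm{id}$), iterating from the innermost pair outward until only solutions of the form giving $n=k_1^2+k_1'^2+3^{\ell-1}(a^2+k_\ell^2)$ survive. The only cosmetic difference is that you divide the equation by $3$ before iterating while the paper works with the undivided equation, and you make explicit the (correct) observation that the parity-based sign constraints are intrinsic to the values and hence preserved under the permutation.
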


\begin{proof}
We will be grouping the solutions $(k_0,k_1,k_1',k_2,k_2',k_3,k_3',\ldots,k_{\ell})$ of the equation
\begin{align}\label{3n 3A}
3n = k_0^2 + 3(k_1^2 +k_1'^2) + 9(k_2^2 +k_2'^2) + 27(k_3^2 +k_3'^2) +\ldots + 3^\ell k_{\ell}^2
\end{align}
into triplets. Note that we have $k_0^2 \equiv 0$ mod $3$, thus $3\mid k_0$ and we can write $k_0 = 3a$
for some integer $a$. This gives
\[
3n = 9a^2 + 3(k_1^2 +k_1'^2) + 9(k_2^2 +k_2'^2) + 27(k_3^2 +k_3'^2) +\ldots + 3^\ell k_{\ell}^2.
\]
Note that with $(3a,k_1,k_1',k_2,k_2',k_3,k_3',\ldots,k_{\ell})$ also $(3k_2,k_1,k_1',k_2',a,k_3,k_3',\ldots,k_{\ell})$
and $(3k_2',k_1,k_1',a,k_2,k_3,k_3',\ldots,k_{\ell})$ are solutions
of \eqref{3n 3A}, which provide us with a triplet of solutions unless $k_2 =k_2' =a$. This leaves us with considering
solutions of the form $(3a,k_1,k_1',a,a,k_3,k_3',\ldots,k_{\ell})$ and the equation
\[
3n = 9a^2 + 3(k_1^2 +k_1'^2) + 9(a^2 +a^2) + 27(k_3^2 +k_3'^2) +\ldots + 3^\ell k_{\ell}^2.
\]
Note that with $(3a,k_1,k_1',a,a,k_3,k_3',\ldots,k_{\ell})$ also $(3k_3,k_1,k_1',k_3,k_3,k_3',a,\ldots,k_{\ell})$
and $(3k_3',k_1,k_1',k_3',k_3',a,k_3,\ldots,k_{\ell})$ are solutions
of \eqref{3n 3A}, which provide us with a triplet of solutions unless $k_3 =k_3' =a$. This leaves us with considering
solutions of the form $(3a,k_1,k_1',a,a,a,a,k_4,k_4',\ldots,k_{\ell})$, and
this process can be iterated until we are left to consider solutions of the form $(3a,k_1,k_1',a,a,\ldots,a,a,k_{\ell})$
and the equation
\begin{align*}
3n & = 9a^2 + 3(k_1^2 +k_1'^2) + 9(a^2 +a^2) + 27(a^2 +a^2) +\ldots + 3^{\ell-1}(a^2+a^2) +3^\ell k_{\ell}^2\\
& = 3(k_1^2 +k_1'^2) + 3^\ell(a^2+k_{\ell}^2)
\end{align*}
In particular,
\[ n = k_1^2 +k_1'^2 + 3^{\ell-1}(a^2+k_{\ell}^2),\]
and the result follows.
\end{proof}

The next result can be proven the same way as Lemma \ref{lem: cong 1C}

\begin{lemma}\label{lem: cong 4C}
If there exist $j$ and $r$ such that the congruence
\begin{align*}
\PD_{3^j}(3^j n+r)\equiv0  \, \operatorname{mod}  3
\end{align*}
holds for all $n\ge 0$, then
\begin{align*}
\PD_{3^{\ell}}(3^j n+r)\equiv0  \, \operatorname{mod} 3
\end{align*}
for all $n\ge 0$ and $\ell \ge j$.
\end{lemma}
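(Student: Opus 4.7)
The plan is to mirror the proof of Lemma \ref{lem: cong 1C}, with Theorem \ref{our pd3} playing the role that Theorem \ref{thm: pd 2 to the ell} played there. Fix $j$, $r$, $\ell \ge j$, and $n \ge 0$, and without loss of generality assume $0 \le r < 3^j$ (this is harmless, because if $r \ge 3^j$ one may absorb the extra multiples of $3^j$ into $n$, and the hypothesis survives). By Theorem \ref{our pd3},
$$\PD_{3^\ell}(3^j n + r) \equiv e_{3^j n + r, \ell} \pmod{3},$$
where $e_{N,\ell}$ counts representations
$$N = k_0^2 + \sum_{m=1}^{\ell-1} 3^m (k_m^2 + k_m'^2) + 3^\ell k_\ell^2$$
subject to the sign conventions stated in Theorem \ref{our pd3}.

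Next I would partition the solution set for $N = 3^j n + r$ by fixing the \emph{tail} $(k_j', k_{j+1}, k_{j+1}', \ldots, k_{\ell-1}, k_{\ell-1}', k_\ell)$ and summing over the \emph{head} $(k_0, k_1, k_1', \ldots, k_{j-1}, k_{j-1}', k_j)$. For each admissible tail, define
$$T = 3^j k_j'^2 + \sum_{m=j+1}^{\ell-1} 3^m (k_m^2 + k_m'^2) + 3^\ell k_\ell^2,$$
which is divisible by $3^j$. Setting $s = 3^j n + r - T$, the head variables must satisfy
$$s = k_0^2 + \sum_{m=1}^{j-1} 3^m (k_m^2 + k_m'^2) + 3^j k_j^2,$$
which is precisely the defining equation for $e_{s,j}$. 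Hence
$$e_{3^j n + r, \ell} \;=\; \sum_{\text{tail}} e_{s,j},$$
where the sum ranges over admissible tails with $s \ge 0$.

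Since each such $s$ satisfies $s \equiv r \pmod{3^j}$, we may write $s = 3^j u + r$ for some $u \ge 0$, and the hypothesis $\PD_{3^j}(3^j u + r) \equiv 0 \pmod{3}$ (again via Theorem \ref{our pd3}, $e_{s,j} \equiv \PD_{3^j}(s) \pmod 3$) forces every summand to vanish modulo $3$. Therefore $\PD_{3^\ell}(3^j n + r) \equiv 0 \pmod 3$. The only delicate point, which is the main bookkeeping obstacle, is the asymmetry at position $m = j$: in the $\ell$\nobreakdash-solution both $k_j$ and $k_j'$ appear (with coefficient $3^j$), whereas in the $j$\nobreakdash-solution only $k_j$ does. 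This is why $k_j'$ must be bundled into the tail while $k_j$ remains in the head; once this split is made, the sign conventions transfer intact from each side of Theorem \ref{our pd3}, and the proof is complete.
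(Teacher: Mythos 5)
Your proof is correct and follows essentially the same route as the paper, which simply asserts that Lemma \ref{lem: cong 4C} ``can be proven the same way as Lemma \ref{lem: cong 1C}''; you carry out exactly that adaptation, splitting each level-$\ell$ representation into a head counted by $e_{s,j}$ and a tail divisible by $3^j$ so that $s\equiv r \bmod 3^j$. Your explicit handling of the asymmetry at position $m=j$ (bundling $k_j'$ into the tail while keeping $k_j$ in the head) is the right bookkeeping and is the only point the paper leaves implicit.
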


As an application of these results, we generalize Theorem \ref{thm: cong 4}.

\begin{corollary}\label{cor: cong 1D}
For all $n\geq0$, $k\ge 1$ and $\ell\geq 2k+1$,
\begin{align*}
\PD_{3^{\ell}}\big(3^{2k}(3n+1)\big)\equiv \PD_{3^{\ell}}\big(3^{2k}(3n+2)\big)\equiv 0 \, \operatorname{mod} 3.
\end{align*}
\end{corollary}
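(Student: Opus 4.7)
The plan is to apply Theorem \ref{thm: cong 4B} to express $\PD_{3^\ell}\bigl(3^{2k}(3n+r)\bigr) \bmod 3$ as a count of representations, and then to show that no such representations exist via a short $3$-adic valuation argument. Fix $k\ge 1$, $\ell\ge 2k+1$, $n\ge 0$, and $r\in\{1,2\}$. Writing $3^{2k}(3n+r) = 3\cdot M$ with $M = 3^{2k-1}(3n+r)$, Theorem \ref{thm: cong 4B} yields
$$ \PD_{3^\ell}\bigl(3^{2k}(3n+r)\bigr) \equiv e^*_{M,\ell} \, \operatorname{mod} 3, $$
so it suffices to prove the stronger statement that the Diophantine equation
$$ M = k_0^2 + k_0'^2 + 3^{\ell-1}(k_1^2 + k_1'^2) $$
admits no integer solutions whatsoever (so in particular none satisfying the sign conventions in the definition of $e^*_{M,\ell}$).

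The key input is the classical fact that the $3$-adic valuation $v_3(k_0^2 + k_0'^2)$ is always even (with $v_3(0) = \infty$). Because $-1$ is not a quadratic residue modulo $3$, whenever $3 \mid k_0^2 + k_0'^2$ one must have $3 \mid k_0$ and $3 \mid k_0'$; replacing $k_0, k_0'$ by $k_0/3, k_0'/3$ strips off a factor of $9$ from the sum, and an immediate induction proves the claim. I would include this one-line justification in the proof, either directly as above or by invoking that $3$ remains inert in $\mathbb{Z}[i]$, in the spirit of Remark \ref{rem:4.2}.

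With this in hand the argument concludes quickly. Set $v := v_3(k_0^2 + k_0'^2)$ (even or infinite) and $v' := v_3\bigl(3^{\ell-1}(k_1^2 + k_1'^2)\bigr)$, so that $v' \ge \ell - 1 \ge 2k$. Since $v_3(M) = 2k - 1$ is odd: if $v \ne v'$, then $v_3(M) = \min(v, v')$, which cannot equal $2k-1$ because $v$ is even and $v'\ge 2k$; and if $v = v'$, then both are at least $2k$ (as $v'\ge 2k$), forcing $v_3(M) \ge 2k > 2k-1$. Either branch contradicts $v_3(M)=2k-1$, so $e^*_{M,\ell}=0$. The argument handles $r=1$ and $r=2$ uniformly, since both $3n+1$ and $3n+2$ are coprime to $3$, and the conclusion holds for every $\ell \ge 2k+1$ simultaneously, so no further appeal to Lemma \ref{lem: cong 4C} is needed. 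The only substantive step is the evenness of $v_3$ of a sum of two squares; the rest is routine bookkeeping with $3$-adic valuations.
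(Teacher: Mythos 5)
Your proof is correct, and its arithmetic engine is the same one the paper uses: everything rests on the fact that $-1$ is not a quadratic residue modulo $3$, so $3 \mid k_0^2+k_0'^2$ forces $3\mid k_0$ and $3\mid k_0'$, and this can be iterated. The packaging, however, is genuinely different in two respects. First, the paper invokes Lemma \ref{lem: cong 4C} to reduce the general case $\ell\ge 2k+1$ to the single case $\ell=2k+1$ before applying Theorem \ref{thm: cong 4B}; you apply Theorem \ref{thm: cong 4B} for arbitrary $\ell\ge 2k+1$ and absorb the dependence on $\ell$ into the single inequality $v_3\bigl(3^{\ell-1}(k_1^2+k_1'^2)\bigr)\ge \ell-1\ge 2k$, so the reduction lemma is not needed at all. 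Second, where the paper carries out the descent $k_0=3a_1$, $a_1=3a_2,\ldots$ explicitly $k$ times and derives a contradiction from divisibility by $3^{2k}$, you encapsulate that descent once and for all as the statement that $v_3(k_0^2+k_0'^2)$ is even (with $v_3(0)=\infty$), and then finish with a short comparison of valuations against $v_3(M)=2k-1$, which is odd. This buys a cleaner, more uniform argument: it treats $r=1$ and $r=2$ simultaneously (the paper writes out only $r=1$ and leaves $r=2$ implicit), and it makes transparent exactly why the exponent $2k$ being even and $2k-1$ being odd is the obstruction. You are also right to note that proving the nonexistence of \emph{all} integer solutions is stronger than required, since $e^*_{M,\ell}$ only counts solutions obeying the sign conventions of Theorem \ref{thm: cong 4B}; making that remark explicit, as you do, correctly disposes of any worry about those conventions.
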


\begin{proof}
We will focus on the congruence $\PD_{3^{\ell}}\big(3^{2k}(3n+1)\big)\equiv 0 \, \operatorname{mod} 3$. With Lemma~\ref{lem: cong 4C},
it will be sufficient to prove $\PD_{3^{2k+1}}\big(3^{2k}(3n+1)\big)\equiv 0 \, \operatorname{mod} 3$, and according to Theorem \ref{thm: cong 4B}
we can focus on the equation
\[3^{2k-1}(3n+1) = k_0^2+k_0'^2+ 3^{2k}(k_1^2 +k_1'^2).\]
Note that $k_0^2+k_0'^2\equiv 0 \, \operatorname{mod} 3$. We conclude $3\mid k_0,k_0'$ and can write $k_0 = 3a_1, k_0' = 3a_1'$ for integers $a_1,a_1'$.
This leads to the new equation
\[3^{2k-3}(3n+1) = a_1^2+a_1'^2+ 3^{2k-2}(k_1^2 +k_1'^2).\]
We can iterate this argument to show that $k_0 = 3^ka_k, k_0' = 3^ka_k'$ with
\[3^{2k-1}(3n+1) = 3^{2k}(a_k^2 + a_k'^2+ k_1^2 +k_1'^2).\]
This is an apparent contradiction as the right-hand side of this equation is divisible by $3^{2k}$ while the left-hand side is not. This shows $e^*_{3^{2k}(3n+1), 2k+1} = 0$.
\end{proof}

%%%%%%%%%%%%%%%%%%%%%%%%%%%%%%%%%%%%%%
\section{Open Problems} \label{concluding}
%%%%%%%%%%%%%%%%%%%%%%%%%%%%%%%%%%%%%%

%Our goal for this paper was not to complete all possible characterizations of $\PD_k(n)$, but instead to highlight that many are directly evident from the structure of the underlying generating functions. Furthermore, by implementing different techniques than commonly used previously we hope that others may be able to continue to find interesting congruences for partitions with designated summands. For instance, of particular interest is the hitherto undocumented connection between the number of partitions with designated summands \cite[A077285]{OEIS} to the number of partitions with odd multiplicities \cite[A055922]{OEIS}. Perhaps this result alone will be fruitful in the study of the parity of partitions with odd multiplicities.

%Once again we remark that a completely computational proof of Theorem \ref{thm: pd4} may be possible using solely identities for $f_k$, and such a proof would be highly welcome. It would also be interesting to find the analog of Theorem \ref{Theorem odd} for even powers of $q$ using Lemma \ref{lemma-odd/even} if possible since odd powers can be characterized so explicitly.

We conclude with some additional conjectured congruences.

\begin{conjecture}
For $n\geq0,$ we have
\begin{align*}
\PD_{2}(16n+12) \equiv0 \, \operatorname{mod} 4,\\
\PD_{2}(24n+20) \equiv0 \, \operatorname{mod} 4,\\
\PD_{2}(25n+5) \equiv0 \, \operatorname{mod} 4,\\
\PD_{2}(32n+24) \equiv0  \, \operatorname{mod} 4,\\
\PD_{2}(48n+26) \equiv0 \, \operatorname{mod} 4,
\end{align*}
and for $r\in\{5,11,15,17\}$,
\begin{align*}
\PD_9(54n+3r)\equiv0 \, \operatorname{mod} 3.
\end{align*}
\end{conjecture}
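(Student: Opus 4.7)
The plan is to reduce each conjectured congruence to an explicit count of solutions of a quadratic Diophantine equation by invoking the characterizations from Sections~\ref{sec: mod 4} and \ref{sec: mod 3}, and then to establish divisibility either by a residue argument or by constructing a group action on the solution set.

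For the four congruences $\PD_2(16n+12)$, $\PD_2(24n+20)$, $\PD_2(32n+24)$, and $\PD_2(48n+26)=\PD_2(2(24n+13))$, Theorem~\ref{pd2(2n) mod4} reduces each to showing that, for $m\in\{8n+6,\,12n+10,\,16n+12,\,24n+13\}$, the count $|\{(k,\ell):k,\ell\geq 1,\,3\nmid k\ell,\,k^2+\ell^2=m\}|$ vanishes (together with the corresponding singly-squared term). A short residue check handles each case: squares modulo $8$ rule out $m=8n+6$; the identity $k^2+\ell^2\equiv 2\,\operatorname{mod} 3$ (when $3\nmid k\ell$) rules out both $m=12n+10$ and $m=24n+13$; and the substitution $k=2k',\,\ell=2\ell'$ reduces $m=16n+12$ to $k'^2+\ell'^2=4n+3$, impossible modulo~$4$.

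For $\PD_2(25n+5)\equiv 0\,\operatorname{mod} 4$, Theorem~\ref{pd2(n) mod4} gives, for $N\geq 1$, the reduction $\PD_2(N)\equiv R(N)\,\operatorname{mod} 4$ with $R(N)=|\{(a,b):a\geq 1,\,3\nmid a,\,b\in\mathbb{Z},\,a^2+b^2=N\}|$. I would split by the residue of $n$ modulo $3$. When $n\equiv 1\,\operatorname{mod} 3$, $3\mid 25n+5$ forces $3\mid a$ in every representation, so $R=0$. When $n\equiv 0\,\operatorname{mod} 3$, the condition $3\nmid a$ is automatic, and using $5=(2+i)(2-i)$ in $\mathbb{Z}[i]$ together with the multiplicativity of $r_2/4$ (where $r_2$ is the standard two-square count) yields $r_2(25n+5)=2\,r_2(5n+1)$, so $R(25n+5)=r_2(5n+1)$, which is always divisible by $4$. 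When $n\equiv 2\,\operatorname{mod} 3$, exactly one of $a,b$ is divisible by $3$ in each representation, and a symmetry argument gives $R(25n+5)=r_2(5n+1)/2$; the goal then becomes $r_2(5n+1)\equiv 0\,\operatorname{mod} 8$, which would follow by noting that if $r_2(m)/4$ is odd and $r_2(m)>0$ then every odd-prime exponent in $m$ is even, so $m=2^a c^2$ with $c$ odd, and the joint constraints $m\equiv 1\,\operatorname{mod} 5$ and $m\equiv 2\,\operatorname{mod} 3$ force $a$ even yet $m$ not a perfect square, a contradiction.

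For the mod-$3$ congruences $\PD_9(54n+3r)\equiv 0\,\operatorname{mod} 3$ with $r\in\{5,11,15,17\}$, Theorem~\ref{thm: cong 4B} with $\ell=2$ reduces them to showing $e^*_{18n+r,\, 2}\equiv 0\,\operatorname{mod} 3$. For $r\in\{5,11,17\}$ we have $18n+r\equiv 2\,\operatorname{mod} 3$, forcing $3\nmid k_0,k_0'$; I plan to group the solutions of $18n+r=k_0^2+k_0'^2+3(k_1^2+k_1'^2)$ into orbits of size $3$ under a permutation analogous to the $\mathbb{Z}/3$-action in the proof of Theorem~\ref{thm: cong 4}, using residues modulo $9$ (and modulo $27$ if needed) to rule out fixed points. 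For $r=15$ we have $18n+r\equiv 0\,\operatorname{mod} 3$, so I would first write $k_0=3a$, $k_0'=3a'$, reducing to $6n+5=3(a^2+a'^2)+(k_1^2+k_1'^2)$ with $3\nmid k_1,k_1'$, and then apply the same grouping. The principal obstacle will be the explicit construction of this order-$3$ action: one must produce a concrete permutation of $(k_0,k_0',k_1,k_1')$ that respects the sign conventions ($k_m\in\mathbb{Z}$ when $k_m$ is even, $k_m\in\mathbb{N}$ when odd) and verify that all fixed points yield residues incompatible with $r\,\operatorname{mod} 18$; by comparison, the chain of implications in the $\PD_2(25n+5)$ proof is elementary but deserves careful bookkeeping.
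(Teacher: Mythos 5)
First, note that the paper offers no proof of this statement: it appears in Section~\ref{concluding} as an open conjecture, so there is no argument of the authors to compare yours against. Judged on its own, your proposal splits into a part that genuinely works and a part that is still only a plan.

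The five $\operatorname{mod} 4$ congruences do appear to follow from the paper's own Theorems~\ref{pd2(2n) mod4} and~\ref{pd2(n) mod4} essentially as you describe. For $\PD_2(2m)$ with $m \in \{8n+6,\ 12n+10,\ 16n+12,\ 24n+13\}$ your residue checks correctly kill the double sum $\sum_{3\nmid k,\ell} q^{k^2+\ell^2}$, but you must also verify in each case that the term $2\sum_{3\nmid k}q^{k^2}$ contributes nothing, i.e.\ that $m$ is not a square: this requires the additional observations $12n+10\equiv 2 \,\operatorname{mod} 4$, $16n+12 = 4(4n+3)$, and $24n+13\equiv 5\,\operatorname{mod} 8$, which you do not state. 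They are trivial but necessary. Your treatment of $\PD_2(25n+5)$ via $R(N)=|\{(a,b): a\ge 1,\ 3\nmid a,\ b\in\mathbb{Z},\ a^2+b^2=N\}|$ and the multiplicativity of $r_2/4$ is also sound (note $25n+5=5(5n+1)$ is never a perfect square, so the $a=0$ and $b=0$ boundary cases are vacuous), though the closing contradiction should be phrased cleanly: if $r_2(5n+1)/4$ were odd then $5n+1=2^ac^2$ with $c$ odd, and $5n+1\equiv 1\,\operatorname{mod} 5$ forces $a$ even while $5n+1 \equiv 2 \,\operatorname{mod} 3$ forces $a$ odd. Written out with this bookkeeping, the five $\operatorname{mod}4$ congruences would cease to be conjectural.

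The genuine gap is the $\operatorname{mod} 3$ family. Reducing $\PD_9(54n+3r)$ to $e^*_{18n+r,2}$ via Theorem~\ref{thm: cong 4B} is fine, but the ``permutation analogous to the $\mathbb{Z}/3$-action in Theorem~\ref{thm: cong 4}'' that you defer constructing does not obviously exist. The order-$3$ map in that proof depends on having both a term $k_0^2$ with $3\mid k_0$ and a term $3^2(k_2^2+k_2'^2)$, so that $k_0=3a$ can be traded against $3k_2$ and $3k_2'$; in the $\ell=2$ equation $18n+r=k_0^2+k_0'^2+3(k_1^2+k_1'^2)$ with $r\equiv 2\,\operatorname{mod} 3$ one has $3\nmid k_0k_0'$, there is no coefficient $9$ anywhere, and no such trade is available. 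Your $r=15$ reduction lands on the same quaternary form $x^2+y^2+3(z^2+w^2)$ with $3\nmid x,y$, so it inherits the same obstruction. Any symmetry must moreover respect the parity-dependent sign convention, under which even and odd coordinates are weighted differently. As it stands, the last third of the proposal is a research program rather than a proof, and it is the part that would require a genuinely new idea (for instance, representation numbers of $x^2+y^2+3z^2+3w^2$ via class-number or theta-series methods).
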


%\fbox{\bf \cite{BK20} has not been referenced as it involves other partitions}

%%%%%%%%%%%%%%%%%%%%%%%%%%%%%%%%%%%%%%%%%%%%%%%%%%%%%%%%%%%%%%
%%%%%%%%%%%%%%%%%%%%%%%%%%%%%%%%%%%%%%%%%%%%%%%%%%%%%%%%%%%%%%

\end{document}